\newtheorem{theorem}{Theorem}[section]
\newtheorem{definition}[theorem]{Definition}
\newtheorem{question}[theorem]{Question}
\newtheorem{lemma}[theorem]{Lemma}
\newtheorem{example}[theorem]{Example}
\newtheorem{proposition}[theorem]{Proposition}
\newtheorem{cor}[theorem]{Corollary}
\newtheorem{conjecture}[theorem]{Conjecture}
\newtheorem*{theorem*}{Theorem}
\newtheorem*{proposition*}{Proposition}
\newtheorem*{cor*}{Corollary}
\newtheorem*{conjecture*}{Conjecture}
\newcommand{\abk}[1]{\left\langle #1 \right\rangle}
\newcommand{\C}{\mathcal C}
\newcommand{\R}{\mathbb R}
\newcommand{\N}{\mathbb N}
\newcommand{\lk}{\mathrm {Link}}
\newcommand{\pierce}[4]{\mathrm{pierce}_{( { #2}, {#3}, {#4})}({#1}) }
\newcommand{\code}{\mathrm{code}}
\newcommand{\suspect}[1]{ }
\newcommand{\revision}[1]{#1}
\title{Geometry, combinatorics, and algebra of inductively pierced codes} 
\author{Caitlin Lienkaemper}
\date{\today}                 
\begin{document}
 \begin{abstract}
Convex neural codes are combinatorial structures describing the intersection pattern of a collection of convex sets.  
Inductively pierced codes are a particularly nice subclass of neural codes introduced in the information visualization literature by Stapleton et al. in 2011 and to the convex codes literature by Gross et al. in 2016. Here, we show that all inductively pierced codes are nondegenerate convex codes and nondegenerate hyperplane codes. In particular, we prove that a $k$-inductively pierced code on $n$ neurons has a convex realization with balls in $\R^{k+1}$ and with half spaces in $\R^{n}$. We characterize the simplicial and polar complexes of inductively pierced codes, showing the simplicial complexes are disjoint unions of vertex decomposable clique complexes and that the polar complexes are shellable. In an earlier version of this preprint, we gave a flawed proof that toric ideals of $k$-inductively pierced codes have quadratic Gr\"obner bases under the term order induced by a shelling order of the polar complex of $\C$. We now state this as a conjecture, inspired by computational evidence.
\end{abstract} 
\maketitle

\tableofcontents
\section{Introduction}
Combinatorial neural codes describe the activity of a population of neurons in terms of which neurons fire together and which do not. A number of authors have sought to characterize the class of convex neural codes, neural codes which have an intrinsic discrete-geometric structure because they arise from the activity of neurons with convex receptive fields  in some space of stimuli \cite{curto2013neural, curto2017makes, lienkaemper2017obstructions, jeffs2018morphisms}.   For instance, hippocampal place cells have receptive fields which correspond to convex regions in an animal's environment. When the animal is in the receptive field of a given place cell, that cell will fire. Thus, neurons fire together if and only if their receptive fields overlap.  

\begin{figure}[!h]
\includegraphics[width = 4 in]{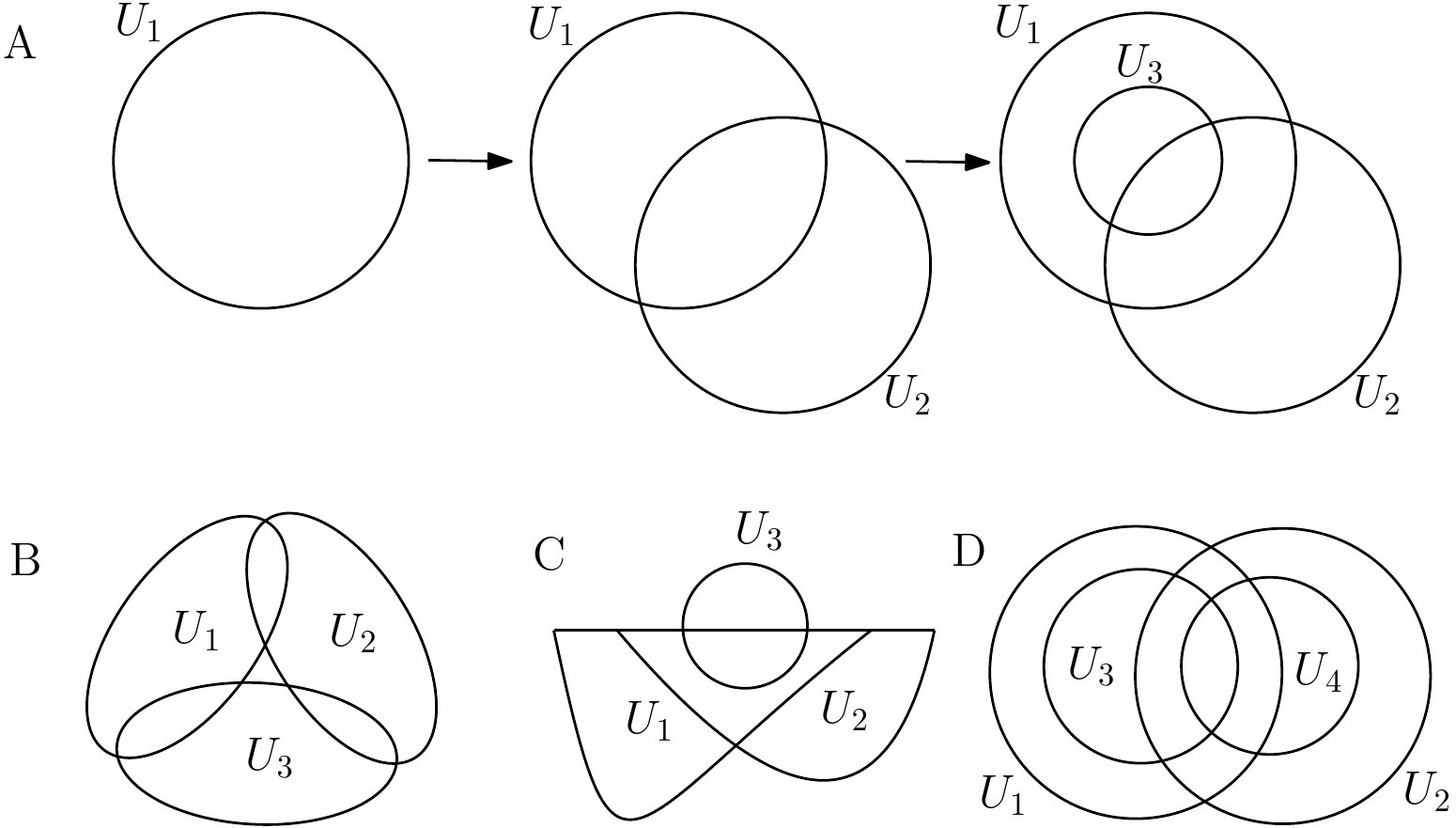}
\caption{\label{fig:main_intro} Examples and non-examples of inductively pierced codes. (A) Building up the code $\{\varnothing, 1, 12, 2, 123, 13\}$ inductively. (B-D) Three convex realizations of neural codes which are not inductively pierced. }
\end{figure}

In order to characterize convex codes intrinsically, without reference to any particular realization, we study a variety of combinatorial and algebraic signatures of convex and non-convex codes. For instance, a code is \emph{max-intersection-complete} if it contains all intersections of its maximal codewords and is \emph{intersection-complete} if it contains all  intersections of its codewords. All max-intersection-complete codes, and therefore all intersection-complete codes, are convex \cite{cruz2016open}. Other combinatorial signatures of convexity and non-convexity stem from the structure of the \emph{simipicial complex} of a code, defined to be the smallest simplicial complex containing the code \cite{curto2017makes, lienkaemper2017obstructions}. Therefore, given a class of codes, it makes sense to ask about the structure of their simplicial complexes. For instance, are they contractible? Collapsible? What about the \emph{polar complex} of a code, a simplicial complex whose facets are in bijection with the codewords? 

These combinatorial signatures and others are reflected in the structure of the \emph{neural ideal} $J_\C$, a \emph{pseudo-monomial} ideal which encodes the combinatorial data of the neural code. In particular, the \emph{canonical form} $CF(J_\C)$ of a neural ideal uses the set of pseudo-monomials which are minimal with respect to divisibility to encode the minimal receptive field relationships in an algebraic way. Various combinatorial signatures of convexity or non-convexity, such as intersection-completeness, show up as algebraic signatures in $CF(J_\C)$. 
 
Inductively $k$-pierced codes are a restrictive class of neural codes inspired by work in information visualization \cite{stapleton2011drawing} and introduced in the context of convex neural codes by Gross, Obatake, and Youngs in \cite{gross2016neural}. Some examples and non-examples of realizations of inductively pierced codes are given in Figure \ref{fig:main_intro}.  Building on \cite{gross2016neural}, we use algebraic and combinatorial invariants to achieve a deeper understanding of the class of inductively pierced codes, answering the questions posed in this introduction. Beyond this, we explore the neural toric ideal of an inductively pierced code, making progress on a conjecture of \cite{gross2016neural}.

We begin by building on \cite{gross2016neural} to establish the basic geometric and algebraic properties of inductively pierced codes. In particular, we show that inductively pierced codes always have realizations resembling that in Figure \ref{fig:main_intro} A:
\begin{proposition*}[Proposition \ref{prop:realization}]
An inductively $k$-pierced code has a convex realization by open balls of dimension $k+1$. In particular, the minimal embedding dimension of an inductively $k$-pierced code is at most $k+1$.
\end{proposition*}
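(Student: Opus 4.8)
The plan is to prove this by induction on the number of neurons $n$, carrying a hypothesis stronger than the mere existence of a realization: that $\C$ admits a realization $\mathcal{B} = (B_1, \dots, B_n)$ by open balls in $\R^{k+1}$ whose bounding spheres are in general position and which is \emph{flexible}, in the sense that every legal piercing site of $\C$ is already geometrically visible. Concretely, whenever $\sigma, \sigma \cup \{i_1\}, \dots, \sigma \cup \{i_1, \dots, i_j\}$ is the consecutive chain of codewords along which a $j$-piercing with $j \le k$ is permitted, I would require a point $p$ in the interior of the atom $U_\sigma$ and an $\varepsilon > 0$ so that $B(p, \varepsilon)$ misses $\partial B_\ell$ for every $\ell \notin \{i_1, \dots, i_j\}$ while, inside $B(p, \varepsilon)$, the spheres $\partial B_{i_1}, \dots, \partial B_{i_j}$ cut out exactly the flag of sign patterns $\varnothing \subset \{i_1\} \subset \dots \subset \{i_1, \dots, i_j\}$. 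The base case is trivial: the empty family realizes the trivial code in $\R^{k+1}$, and a single open ball realizes $\{\varnothing, 1\}$, both flexibly, as there is no constraint to verify.

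For the inductive step I would take $\C'$ on $n+1$ neurons obtained from $\C$ on $[n]$ by a $j$-piercing ($j \le k$) through $i_1, \dots, i_j$ over $\sigma$, so that $\C$ contains the chain above and $\C' = \C \cup \{\, \sigma \cup \{i_1, \dots, i_\ell\} \cup \{n+1\} : 0 \le \ell \le j\,\}$. Using the flexible realization $\mathcal{B}$ of $\C$ from the inductive hypothesis, I would locate the point $p$, radius $\varepsilon$, and local flag it provides for precisely this piercing site, and then take $B_{n+1}$ to be a small ball centered near $p$, of radius below $\varepsilon$, positioned to straddle all $j$ spheres $\partial B_{i_1}, \dots, \partial B_{i_j}$. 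It is then a direct check that $\mathcal{B} \cup \{B_{n+1}\}$ realizes $\C'$: crossing $B_{n+1}$ along the flag direction produces exactly the $j+1$ new codewords, no old codeword disappears because $B_{n+1}$ is far too small to absorb an atom, and no atom outside the chain is disturbed. What remains is to re-establish flexibility for $\C'$, i.e., to check that $B_{n+1}$ can be chosen --- in position and radius --- generically enough that every legal piercing site of $\C'$, whether or not it involves neuron $n+1$, is again visible; here one spends the slack in the choice of $B_{n+1}$ and the unused dimension of $\R^{k+1}$.

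The main obstacle, I expect, is getting the flexibility hypothesis right --- strong enough to be inherited by $\C'$, yet weak enough to hold in the base case --- and then carrying out the verification in the inductive step, which in general forces one to first reposition $B_{i_1}, \dots, B_{i_j}$ (and possibly other balls) within the open set of configurations that preserve $\C$, so as to bring the relevant spheres into transverse flags. The reason such adjustments can always be made is dimensional: in $\R^{k+1}$ the transverse intersection $\partial B_{i_1} \cap \dots \cap \partial B_{i_j}$ of $j \le k$ bounding spheres is a manifold of dimension $(k+1) - j \ge 1$, so there is at least a curve of candidate centers $p$ from which to avoid the finitely many forbidden spheres, plus a spare direction in which to maneuver the pierced balls and to give $B_{n+1}$ positive radius. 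In $\R^{k}$ this intersection may collapse to finitely many points, all of which can be unusable, which is exactly why $k+1$ and not $k$ is the dimension that works.
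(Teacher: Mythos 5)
Your overall strategy matches the paper's: induction on $n$, a strengthened inductive hypothesis ensuring that each legal piercing site of $\C$ is geometrically available, construction of $U_{n+1}$ as a small ball near a carefully chosen point, and the dimensional observation that $j \le k$ boundary spheres in $\R^{k+1}$ intersect in a set of dimension $\ge 1$ (the paper isolates this as Lemma \ref{lem:circles}). But your write-up has a genuine gap stemming from a misreading of the piercing operation.

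A $j$-piercing over $(\lambda, \sigma, \tau)$ with $\lambda = \{i_1,\dots,i_j\}$ requires $\sigma\cup\nu\in\C$ for \emph{every} $\nu\subseteq\lambda$, and adds the $2^j$ new codewords $\sigma\cup\nu\cup\{n+1\}$, one for each subset $\nu$ --- not the $j+1$ codewords along a nested chain, which is how you describe both the precondition and the output. This error propagates into your flexibility hypothesis: you ask for a point $p$ near which the spheres $\partial B_{i_1},\dots,\partial B_{i_j}$ cut out only a flag of $j+1$ sign patterns. Near any point of $\bigcap_\ell\partial B_{i_\ell}$, transversality forces all $2^j$ sign patterns to appear, so the flag picture is impossible there; and if instead $p$ sits in the interior of the atom $A_\sigma$, as you write, then a small enough ball around $p$ meets no boundary sphere at all. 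Moreover, even if the flag picture could be arranged, $U_{n+1}$ would meet only $j+1$ of the $2^j$ atoms $A_{\sigma\cup\nu}$ it must meet, so the realization would be missing most of the new codewords. The fix is what the paper does: place $p$ directly on $\bigcap_{i\in\lambda}\partial U_i$, inside the region $\bigcap_{i\in\sigma}U_i\setminus\bigcup_{j\in\tau}U_j$; the dimension count you already invoke (Lemma \ref{lem:circles}) shows this intersection is a sphere of dimension $k-j+1\ge 1$, and a sufficiently small ball around any such $p$ meets all $2^j$ required atoms and no others.
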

This generalizes a result in the information visualization literature that inductively 1-pierced codes and some inductively 2-pierced codes can be realized with disks in $\R^2$  \cite{stapleton2011drawing}.  

Next, we pull out some immediate combinatorial corollaries of the full characterization of the neural ideals and canonical forms of inductively pierced codes given in \cite{gross2016neural}. In particular, we show that inductively pierced codes are intersection-complete and that their simplicial complexes are clique complexes.

Next, we show that the simplicial complex of an inductively pierced code is a disjoint union of \emph{vertex-decomposable} complexes. Vertex-decomposability is a strong combinatorial condition on simplicial complexes which implies collapsibility and, in turn, contractibility. 
\begin{theorem*}[Theorem \ref{thm:vertex_decomp}]The simplicial complex of an inductively pierced code is a disjoint union of vertex decomposable simplicial complexes. 
\end{theorem*}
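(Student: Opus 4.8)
\emph{Proof proposal.} The plan is to induct on the number of neurons, via the inductive construction of inductively pierced codes recalled above, working one connected component at a time: writing $\Delta(\C)$ as the disjoint union of its connected components $\Delta_1,\dots,\Delta_m$, it suffices to prove that each $\Delta_j$ is vertex decomposable. (It would be too much to ask that $\Delta(\C)$ \emph{itself} be vertex decomposable: the disjoint union of two edges --- the simplicial complex of a code built from two ``far apart'' $1$-piercings --- is a disjoint union of vertex decomposable complexes without being vertex decomposable, so the statement is genuinely about the components.)

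The workhorse is a gluing lemma: if $L$ is a vertex decomposable simplicial complex, $\sigma\in L$ is a face that is \emph{not} a facet of $L$ (or $L$ is a simplex), and $v\notin V(L)$, then $L\cup\langle\sigma\cup\{v\}\rangle$ is vertex decomposable. Indeed $v$ is a shedding vertex: $\mathrm{del}(v)=L$ and $\lk(v)=\langle\sigma\rangle$ are vertex decomposable, and since no facet of $L$ is contained in $\sigma\cup\{v\}$ (this is exactly where ``$\sigma$ not a facet'' enters), every facet of $L$ remains a facet of the enlarged complex.

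Next I would describe the effect of a single piercing on the simplicial complex. If $\mathcal D$ on $[n+1]$ is obtained from $\C=\mathcal D|_{[n]}$ by piercing with neuron $n+1$, then --- using that deleting the new neuron recovers $\C$ and that both simplicial complexes are clique complexes --- one checks, from the description of pierced codes in \cite{gross2016neural}, that $\Delta(\mathcal D)=\Delta(\C)\cup\langle\sigma\cup\{n+1\}\rangle$, where $\sigma$ is the neighborhood of $n+1$ in the $1$-skeleton, $\sigma$ is a codeword of $\C$, and $\mathrm{del}_{\Delta(\mathcal D)}(n+1)=\Delta(\C)$. So a piercing is exactly a ``cone over a codeword.'' A disjoint piercing ($\sigma=\varnothing$) spawns a new isolated vertex --- a one-point component, hence a simplex --- and leaves the other components alone; a non-disjoint piercing replaces a single component $L$ of $\Delta(\C)$ by $L\cup\langle\sigma\cup\{n+1\}\rangle$. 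By the inductive hypothesis $L$ is vertex decomposable, so the gluing lemma closes the induction --- \emph{provided} $\sigma$ is not a facet of $L$, or $L$ is a simplex.

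The crux, and the step I expect to be the real obstacle, is precisely this proviso: one must show that a piercing can always be arranged so that the coned-over codeword $\sigma$ is not a maximal codeword of a non-simplex component. This cannot be skipped --- coning over a facet of a non-simplex component need not stay vertex decomposable (it can produce, e.g., two $2$-simplices meeting in a single vertex, whose link at that vertex is disconnected, so the complex fails even Cohen--Macaulayness) --- so the argument must invoke the restrictive combinatorics of inductively pierced codes, in practice the explicit description of their codewords and canonical forms from \cite{gross2016neural}, to exhibit, for every inductively pierced code, an order of construction in which each piercing cones over a non-maximal codeword (or over a component that is already a simplex). Beyond that, the remaining work --- organizing the nested inductions (on the number of neurons, and on $L$ inside the gluing lemma) and checking the base cases --- is routine.
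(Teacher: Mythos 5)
Your plan is essentially the paper's: induct on the number of neurons, take $v=n+1$ as the distinguished vertex, observe that $\lk_{\Delta(\C')}(n+1)$ is the simplex on $\lambda\cup\sigma$ and $\mathrm{Del}_{\Delta(\C')}(n+1)=\Delta(\C)$, and apply the inductive hypothesis to the component containing $\lambda\cup\sigma$. The one genuine difference is the definition of vertex decomposability you use. You invoke the standard (Bj\"orner--Wachs) definition, which additionally requires $v$ to be a \emph{shedding} vertex --- every facet of $\mathrm{Del}(v)$ must remain a facet of the whole complex, equivalently $\lambda\cup\sigma$ must not be a facet of the relevant component of $\Delta(\C)$. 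The definition stated in this paper (Section 3.1) omits the shedding condition entirely, so the paper's argument never has to confront your ``not a facet'' proviso: $\lk(n+1)$ is a simplex, $\mathrm{Del}(n+1)$ is handled by induction, done.

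The crux you flag is real, and it is not resolvable by reordering. Take the inductively pierced code
$\C=\{\varnothing,1,12,2,3,13,4,14,24,124,5,15,35,135\}$,
built by $1$-piercing $2$ over $\lambda=\{1\}$; then $3$ over $\lambda=\{1\}$, $\tau=\{2\}$; then $2$-piercing $4$ over $\lambda=\{1,2\}$, $\tau=\{3\}$; then $2$-piercing $5$ over $\lambda=\{1,3\}$, $\tau=\{2,4\}$. Its simplicial complex $\Delta(\C)$ has facets $\{124,135\}$ --- two triangles meeting at the single vertex $1$ --- which is connected. For every vertex $v$ of this complex, some facet of $\lk(v)$ is already a facet of $\mathrm{Del}(v)$ (check $v=1$: $\mathrm{Del}(1)=\lk(1)=\{24,35\}$; check $v=4$: $\lk(4)=\{12\}$ and $12$ is a facet of $\mathrm{Del}(4)=\{12,135\}$; the rest are symmetric). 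So there is no shedding vertex at all: under the standard definition, this connected simplicial complex is \emph{not} vertex decomposable, and the theorem as you have interpreted it would be false for this code. Thus your gluing lemma is correct but the reduction to it cannot be completed --- not because the right reordering is hard to find, but because it does not exist. The paper's proof goes through only because its stated definition of vertex decomposability is strictly weaker than the standard one (in fact the ``bowtie'' above is not even Cohen--Macaulay in general dimension, so the weakened definition loses the usual downstream implications). If you want your proof to compile against the paper's stated definition, simply drop the shedding condition and your argument collapses to the paper's; if you want the standard notion, the theorem needs to be weakened or reformulated.
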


We discuss the polar complexes of inductively pierced codes, which are simplicial complexes on $2n$ vertices whose facets correspond to codewords. We show that polar complexes of inductively pierced codes are shellable. Roughly, this means that there is a way to order their facets such that the complex can be glued together nicely.
   \begin{theorem*}[Theorem \ref{thm:shellability_2}]
 Inductively pierced codes have shellable polar complexes. 
 \end{theorem*}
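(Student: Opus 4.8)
The plan is to induct on the number of neurons, using the inductive definition of the code class to reduce everything to understanding a single piercing at the level of polar complexes. Fix an ordering $1,\dots,n$ of the neurons witnessing that $\C$ is inductively $k$-pierced, so that each restriction $\C_j:=\C|_{[j]}$ is obtained from $\C_{j-1}$ by a piercing of neuron $j$ of degree at most $k$, with $\C_0=\{\varnothing\}$ and $\C_n=\C$. Recall that in the polar complex $\Gamma(\mathcal{D})$ of a code $\mathcal{D}$ on $[m]$ the facet attached to a codeword $c$ is $\sigma_c=\{x_i^+:i\in c\}\cup\{x_i^-:i\notin c\}$, a simplex on $m$ vertices, so $\Gamma(\mathcal{D})$ is pure of dimension $m-1$. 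The base case $\Gamma(\C_0)$ is a single simplex, hence vacuously shellable, so it suffices to show that each piercing step preserves shellability.

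The structural engine is a description of what a piercing does to the polar complex. A piercing of neuron $j$ leaves every codeword of $\C_{j-1}$ intact and adjoins exactly the codewords $\tau\cup\{j\}$ for $\tau$ in the set $P\subseteq\C_{j-1}$ of atoms pierced through, where the shape of $P$ is prescribed by the piercing data (for a $p$-piercing through neurons $\ell_1,\dots,\ell_p$ over a base region $\sigma$, $P$ is $\sigma$ together with a Boolean lattice, a chain, or an intermediate pattern on $\{\ell_1,\dots,\ell_p\}$ according to how the pierced sets nest). Passing to polar complexes yields the decomposition
\[
\Gamma(\C_j)\;=\;\bigl(x_j^-\ast\Gamma(\C_{j-1})\bigr)\ \cup\ \bigl(x_j^+\ast\Gamma(P)\bigr),
\]
in which $\Gamma(P)\subseteq\Gamma(\C_{j-1})$ is the subcomplex generated by the facets $\sigma_\tau$, $\tau\in P$, and the two cones intersect in precisely $\Gamma(P)$ because neither new vertex $x_j^{\pm}$ lies in the opposite cone. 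I would prove this by unwinding the definitions of piercing and of the polar complex, checking in particular that no old facet is altered and that $\Gamma(P)$ and $\Gamma(\C_{j-1})$ have the same dimension $j-2$.

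Two facts then carry the induction through. First, $\Gamma(P)$ is shellable: every neuron outside $\{\ell_1,\dots,\ell_p\}$ contributes the same $\pm$ vertex to all facets of $\Gamma(P)$, so that complex is a join of a simplex with the polar complex $\Gamma_0$ of the piercing pattern on $\{\ell_1,\dots,\ell_p\}$, and $\Gamma_0$ is a shellable ball or sphere in every admissible configuration --- the boundary of a $p$-dimensional cross-polytope when the pierced sets are in general position, a ``path'' of simplices when they are nested --- while joining with a simplex (iterated coning) preserves shellability. Second, I will use a gluing lemma: if $\Gamma$ is shellable, $\Delta\subseteq\Gamma$ is a pure shellable subcomplex of the same dimension as $\Gamma$, and $v,w$ are distinct vertices not in $\Gamma$, then $(v\ast\Gamma)\cup(w\ast\Delta)$ is shellable; one concatenates a $v$-coned shelling of $\Gamma$ with a $w$-coned shelling of $\Delta$, and the restriction set of each newly appended facet $w\ast\sigma_\tau$ comes out pure of top dimension because $w\ast\sigma_\tau$ meets the $v$-side in the entire facet $\sigma_\tau$. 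Taking $\Gamma=\Gamma(\C_{j-1})$ (shellable by induction), $\Delta=\Gamma(P)$, $v=x_j^-$, $w=x_j^+$ shows $\Gamma(\C_j)$ is shellable and closes the induction.

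I expect the main obstacle to be the combinatorial bookkeeping around the definition of a $k$-piercing: pinning down exactly which set $P$ of atoms is pierced --- hence exactly which subcomplex one glues in --- across all admissible nesting configurations of the pierced sets, verifying in each case that $\Gamma(P)$ is shellable and full-dimensional, and confirming that the displayed decomposition of $\Gamma(\C_j)$ holds literally, i.e.\ that a piercing adds precisely those facets and disturbs nothing else. The gluing lemma is routine once stated, although I would take care to check that the restriction complexes produced at every step are genuinely pure of the top dimension and nonempty, which is exactly what the ``absorbing facet'' observation provides.
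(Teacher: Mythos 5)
Your strategy is essentially the paper's: induct on the number of neurons, decompose $\Gamma(\C')$ as $(\bar n \ast \Gamma(\C)) \cup (n \ast \Gamma(P))$ with the two cones meeting in the full-dimensional subcomplex $\Gamma(P)$, observe that $\Gamma(P)$ is the join of a fixed simplex with a cross-polytope (hence shellable), and concatenate shelling orders of the two pieces. The paper carries out the concatenation inline, verifying the two intersection conditions directly, whereas you factor it through a clean gluing lemma; the paper also works with a specific, explicitly defined shelling order $<$ (weight within a piercing step, ties broken lexicographically), because that same order is reused later in the toric ideal discussion, whereas you only need existence. One clarification that simplifies your plan: the worry you flag about the shape of $P$ (Boolean lattice vs.\ chain vs.\ intermediate pattern) does not arise. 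By definition, a $(\lambda,\sigma,\tau)$-piercing requires $\sigma\cup\nu\in\C$ for \emph{every} $\nu\subseteq\lambda$ and always adjoins exactly the codewords $\{\sigma\cup\nu\cup\{n+1\}:\nu\subseteq\lambda\}$, so $P$ is always the full Boolean lattice on $\lambda$ translated by $\sigma$, and $\Gamma(P)$ is always the join of the simplex $\Sigma=\{i : i\in\sigma\}\cup\{\bar j : j\in\tau\}$ with the boundary of the $|\lambda|$-dimensional cross-polytope. There is only one case, and the ``main obstacle'' you anticipate disappears.
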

 
\suspect{We use this shelling order again when we study Gr\"obner bases of toric ideals of inductively pierced codes.}\revision{We use this shelling order again when we study Gr\"obner bases of toric ideals of inductively pierced codes.}

Itskov, Kunin, and Rosen showed that if $\C$ is a nondegenerate hyperplane code, the polar complex of $\C$ is shellable \cite{itskov2018hyperplane}. Thus, our result that polar complexes of inductively pierced codes are shellable motivated us to look for hyperplane realizations of inductively pierced codes. We find that, indeed, inductively pierced codes are hyperplane codes. 

\begin{theorem*} [Theorem \ref{thm:hyperplane}.] Inductively pierced codes are nondegenerate hyperplane codes.\end{theorem*}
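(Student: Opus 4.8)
The plan is to build a hyperplane realization inductively, tracking the pierce structure of $\C$ as it is constructed neuron by neuron. Recall that an inductively $k$-pierced code is built by starting from the trivial code and repeatedly adding a new neuron whose receptive field is ``$j$-pierced'' by at most $k$ existing regions, for $j \le k$. The base case is trivial: a single neuron (or the empty code) is realized by a single half space in $\R^1$. For the inductive step, suppose $\C'$ is obtained from $\C$ by adding neuron $n+1$ which is $j$-pierced with respect to some codewords of $\C$, and suppose we have a nondegenerate hyperplane realization of $\C$ living in an affine patch $P \subseteq \R^N$ cut out by hyperplanes $H_1, \dots, H_N$. The key idea is that adding a $j$-pierced neuron corresponds geometrically to slicing off a ``bump'' that crosses exactly the $j$ regions being pierced, and this bump can be carved out by a single new hyperplane $H_{N+1}$, after embedding everything into one higher dimension $\R^{N+1}$ with a fresh coordinate.

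The central step is to argue that the new hyperplane $H_{N+1}$ can be positioned so that $U_{n+1} = H_{N+1}^+ \cap P$ (intersected with the new affine patch) meets precisely the atoms of the old arrangement that it is supposed to, namely those whose codewords contain all of the $j$ piercing neurons, and no others. Here I would exploit the combinatorial characterization of the canonical form and neural ideal of inductively pierced codes inherited from \cite{gross2016neural}: when neuron $n+1$ is $j$-pierced by neurons in a set $\sigma$ with $|\sigma| = j$, the codewords of $\C'$ are exactly those of $\C$ together with $c \cup \{n+1\}$ for each codeword $c$ with $\sigma \subseteq c$, and the atom structure near $\sigma$ is a single contractible region. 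So I need a hyperplane whose positive side intersects the region $\bigcap_{i \in \sigma} U_i$ in a small convex neighborhood and misses everything outside. Because the old realization is a realization by half spaces in general position, each region $U_i$ is itself a convex polytope (intersected with $P$), and I can choose $H_{N+1}$ to be nearly parallel to a supporting hyperplane of $\bigcap_{i\in\sigma} U_i$, tilted slightly using the new coordinate direction so that it bites off a lens-shaped piece lying entirely inside that intersection. Nondegeneracy is preserved because I have one extra dimension of freedom: I can perturb $H_{N+1}$ (and, if necessary, the lifts of the old hyperplanes into the new coordinate) so that no $N+2$ of the hyperplanes meet in a point and every subset of size $\le N+1$ meets in the expected dimension, using a standard general-position argument.

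I would organize the write-up as follows. First, state precisely the inductive invariant: $\C$ has a realization by half spaces $H_1^+, \dots, H_N^+$ in $\R^N$ in general position (nondegenerate), with all atoms nonempty exactly on the codewords of $\C$, and with a designated bounded convex ``working region'' containing all the action. Second, handle the $0$-piercing case (adding a neuron whose region is disjoint from or contains existing regions in the trivial way), which just requires adding a half space far away or a half space containing the relevant piece; this may or may not require a new dimension depending on the exact definition, but can always be done in $\R^{N+1}$ to be safe. Third, handle the general $j$-piercing case with the lens construction above. Fourth, verify nondegeneracy via a dimension-count perturbation lemma. The main obstacle I anticipate is the bookkeeping in the $j$-piercing step: ensuring the new half space intersects every atom inside $\bigcap_{i \in \sigma} U_i$ (so that $c \cup \{n+1\}$ is realized for every $c \supseteq \sigma$) while intersecting no atom outside it, given that $\bigcap_{i\in\sigma} U_i$ may be subdivided by other hyperplanes into several atoms. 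This is handled by taking the lens thin enough and translating it so it crosses all the relevant sub-atoms — possible because those sub-atoms share the common region $\bigcap_{i \in \sigma} U_i$, which is convex and open, so a suitably long thin slab can be threaded through all of them at once — but making this rigorous requires care, and is where I would spend most of the technical effort. Finally, once a realization by half spaces in general position exists, it is by definition a nondegenerate hyperplane code, completing the proof.
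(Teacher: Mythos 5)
Your proposal gets the top-level strategy right (induct on piercings, lift to one higher dimension, carve off the new region with a single new half space), but there is a genuine gap in the geometry that your ``lens'' construction does not resolve: \emph{how does the new hyperplane interact with the bounding convex set $X$ in $\R^{N+1}$?} If you simply extrude the old arrangement (and the old bounding polytope) into the new $x_{N+1}$ direction, then any single half space $H^+_{N+1}$ cuts the lifted bounding region in a set whose projection back to the $x_{N+1}=0$ slice is a full half-space slice of the old arrangement, not a tiny neighborhood of the point $p\in\bigcap_{i\in\lambda}H_i$. Tilting $H_{N+1}$ ``slightly in the new coordinate'' does not fix this: near the $x_{N+1}=0$ plane, the trace of $H_{N+1}$ is just another hyperplane in $\R^N$, and no single hyperplane can carve a bounded ball out of $\R^N$. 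The idea that makes this work in the paper is that the bounding set itself must be changed: one replaces $X$ by the \emph{cone} $\tilde p'*X$ over the old bounding simplex with apex $\tilde p'$ placed directly above a perturbed copy $p'$ of $p$, and then takes $H_{N+1}$ to be a hyperplane parallel to the $x_{N+1}=0$ slice that cuts off the tip of the cone near the apex. By similar triangles, that tip projects to a small dilate of $X$ centered at $p'$, which \emph{is} the small convex neighborhood of $p$ you want. The inductive invariant that the bounding region is a simplex (a cone at every stage) is exactly what keeps this going; your invariant of ``half spaces in general position with a bounded working region'' is too weak to drive the step.

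Two smaller points worth flagging. First, your description of the piercing combinatorics conflates $\sigma$ (the background ``on'' motif) with $\lambda$ (the set of regions actually pierced): the new codewords are $\sigma\cup\nu\cup\{n+1\}$ for $\nu\subseteq\lambda$, not $c\cup\{n+1\}$ for all $c\supseteq\sigma$, and correspondingly $U_{n+1}$ must avoid the atoms of $\bigcap_{i\in\sigma}U_i$ that lie inside any $U_j$ with $j\in\tau$. This is why the correct target is a small neighborhood of a point on $\bigcap_{i\in\lambda}\partial U_i$ that is also interior to $\bigcap_{i\in\sigma}U_i\setminus\bigcup_{j\in\tau}U_j$, not a slab threading all of $\bigcap_{i\in\sigma}U_i$. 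Second, nondegeneracy is not handled by a generic perturbation afterthought in the paper; it is built in by choosing $p'$ off every hyperplane of the old arrangement before taking the cone. Your ``perturb to general position'' step needs to preserve the code, and the cleanest way to guarantee that is the paper's move of perturbing $p$ to $p'$ \emph{inside} a slightly shrunk simplex before lifting.
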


\suspect{Finally, we explore the toric ideals of inductively pierced codes, expanding upon the results in \cite{gross2016neural} that  toric ideals of all inductively 1-pierced codes have quadratic bases and toric ideals of all inductively 1-pierced codes on at most three neurons have quadratic Gr\"obner bases.  }

\suspect{
If $\C$ is an inductively $k$-pierced code, its toric ideal has a quadratic Gr\"obner basis. }

%

The organization of this paper is as follows.  Section \ref{defs} reviews basic definitions related to neural codes. We define inductively pierced codes and state some short results.  Section \ref{not_algebra} gives additional geometric and combinatorial results about inductively pierced codes. In particular, we show that their simplicial complexes are vertex-decomposable (Theorem \ref{thm:vertex_decomp}),  that their polar complexes are shellable (Theorem \ref{thm:shellability_2}), and that they are nondegenerate hyperplane codes (Theorem \ref{thm:hyperplane}).  \suspect{Finally, Section \ref{algebra}  introduces toric ideals and Gr\"obner bases and shows that toric ideals of inductively pierced codes have quadratic Gr\"obner bases (Theorem \ref{thm:grob}). } \revision{Finally, Section \ref{algebra}  introduces toric ideals and Gr\"obner bases and discusses toric ideals of inductively pierced codes.} Section 5 ends with a discussion and some open questions. 

\section{Background \label{defs}}
\subsection{Neural Codes and Realizations}
A \emph{neural code} on $n$ neurons is a subset $\mathcal \C$ of the boolean lattice $\mathcal P([n])$. We refer to the elements of $\C$, which are subsets of $[n]$,  as \emph{codewords}. An arrangement of sets $\mathcal U = \{U_1, \ldots, U_n\}\subset X$ defines a code $\code(\mathcal U, X)$. When the choice of ambient space $X$ is clear, we will write $\code(\mathcal U) = \code(\mathcal U, X)$. The codeword $c\subset [n]$ is an element of $\code(\mathcal U)$ whenever $$A_c = \bigcap_{i\in c} U_i \setminus \bigcup_{j\notin c} U_j  \neq \varnothing.$$ Equivalently, we can define $\code(\mathcal U)$  by labelling each point of $x\in X$ with the set of all $i$ such that $x\in U_i$, and defining $\mathcal C(\mathcal U)$ to be the set of labels used. We say that $\mathcal U$ is a \emph{realization} of $\code(\mathcal U)$ and $A_c$ is the \emph{atom} of $c$. 

A neural code $\mathcal C$ is \emph{convex} if it has a realization $\mathcal U = \{U_1, \ldots, U_n\}$ such that each $U_i$ is a convex open subset of $\R^d$. The minimal value of $d$ for  which this is possible is  the \emph{minimal embedding dimension. } A realization $\mathcal U = U_1, \ldots, U_n$ is \emph{nondegenerate} if the code $\code(\mathcal U)$ is stable under small perturbations of the arrangement. More formally, a realization is nondegenerate there exists $\epsilon >0 $ such that if $d_{\mathrm{Hausdorff}} (U_i, V_i) < \epsilon$ for all $i \in [n]$, $\code(U_1, \ldots, U_n) = \code(V_1, \ldots, V_n)$. 

We can narrow the concept of a convex code and look at hyperplane codes. A code $\C$ is a \emph{hyperplane code} if there exist half spaces  $\mathcal H = H_1^+, \ldots, H_n^+$ and a convex set $X$ such that $H_1^+\cap X, \ldots, H_n^+\cap X$ is a realization of $\C$.  That is, $\C = \code(\mathcal H, X)$. We call $\C$ a \emph{nondegenerate hyperplane code} if this realization is nondegenerate.  For an example of a nondegenerate hyperplane code, see Figure \ref{fig:hyperplane}.

\begin{figure}
\includegraphics[width = 3.5 in] {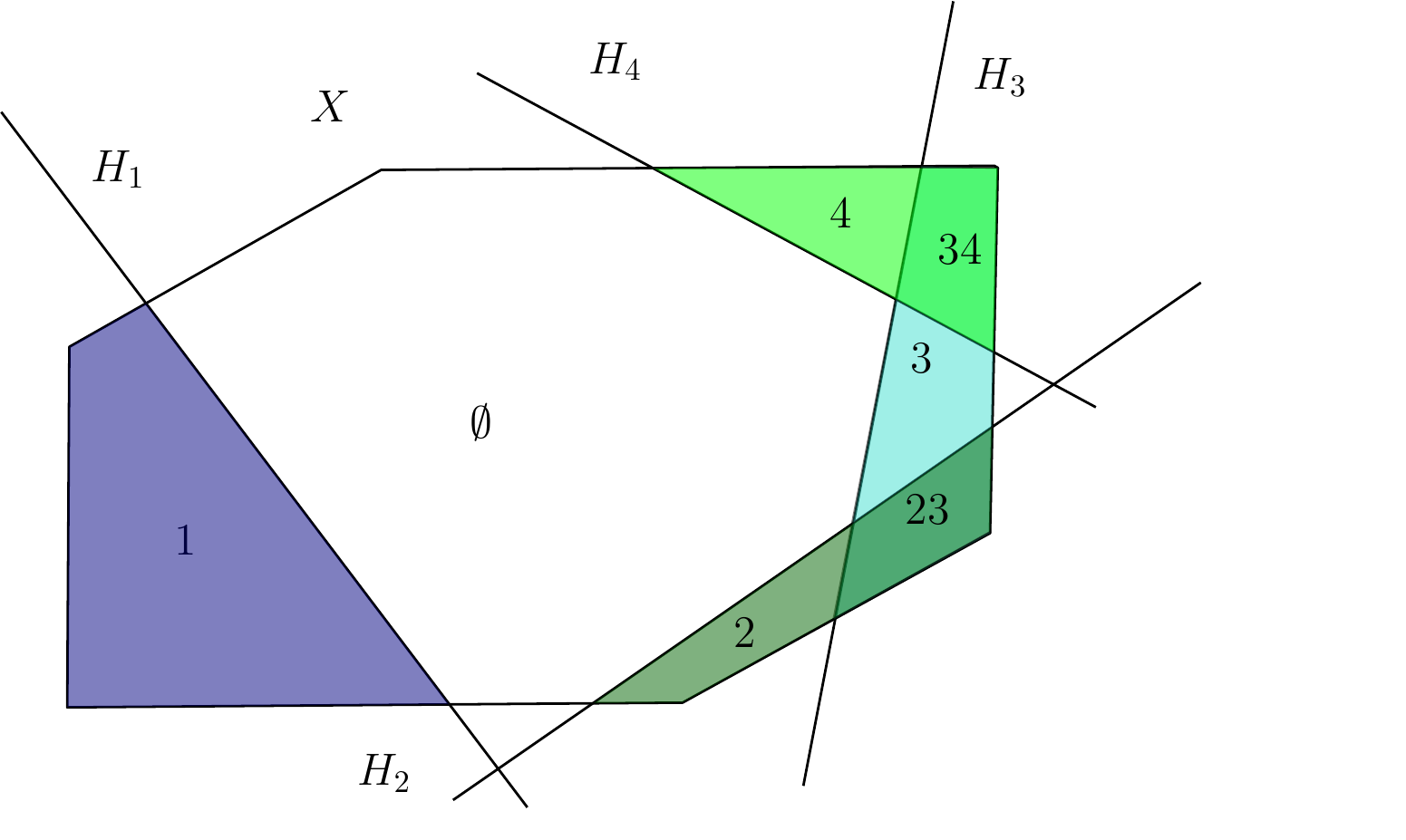}
\caption{A realization of a hyperplane code. Note the importance of the bounding convex set $X$: the half spaces $H_1^+, H_2^+$  and $H_4^+$ are disjoint, which is not possible if we take the bounding set to be all of $\R^2$. \label{fig:hyperplane}}
\end{figure}


\subsection{Inductively Pierced Codes}
We now define the operation of piercing in terms of neural codes. For more information, see \cite{gross2016neural}.  The input to the piercing operation is a neural code $\C$ on $n$ neurons and a partition of the set of neurons $[n]$ into three disjoint subsets $\lambda, \sigma$ and $\tau$. The output is a neural code $\pierce \C \lambda \sigma \tau $ on $n+1$ neurons, $\C\subset \pierce \C \lambda \sigma \tau$. This operation is not possible for all choices of $\C, \lambda, \sigma$ and $\tau$. 

\begin{definition}
Let $\C$ be a neural code on $[n]$ and $(\lambda,\sigma, \tau )$ be a partition of $[n]$, $|\lambda | = k$. 
We say that  $\C$ is $(\lambda, \sigma, \tau)$ pierceable if for all $\nu\subset \lambda$, $\sigma\cup \nu\in \C$. 
If $C$ is $(\lambda, \sigma, \tau)$ pierceable, the \emph{$k$-piercing} of $\lambda$ in $\C$ with respect to the background motif $\sigma, \tau$ is the code 
$$\pierce \C   \lambda  \sigma \tau = \C \cup \bigcup_{\nu\subset \lambda} \{\{\sigma\cup \nu \cup \{n+1\}\}\}$$
\end{definition}

\begin{definition}
A neural code $\C'$ on $n$ neurons is \emph{inductively $k$-pierced} if $n=1$ or if there exists a $k-$pierced code $\C$ on $n-1$ neurons such that $$\C' = \pierce \C \lambda \sigma \tau.$$
\end{definition}

\begin{figure}[!h]
\begin{center}
\includegraphics[width=4 in]{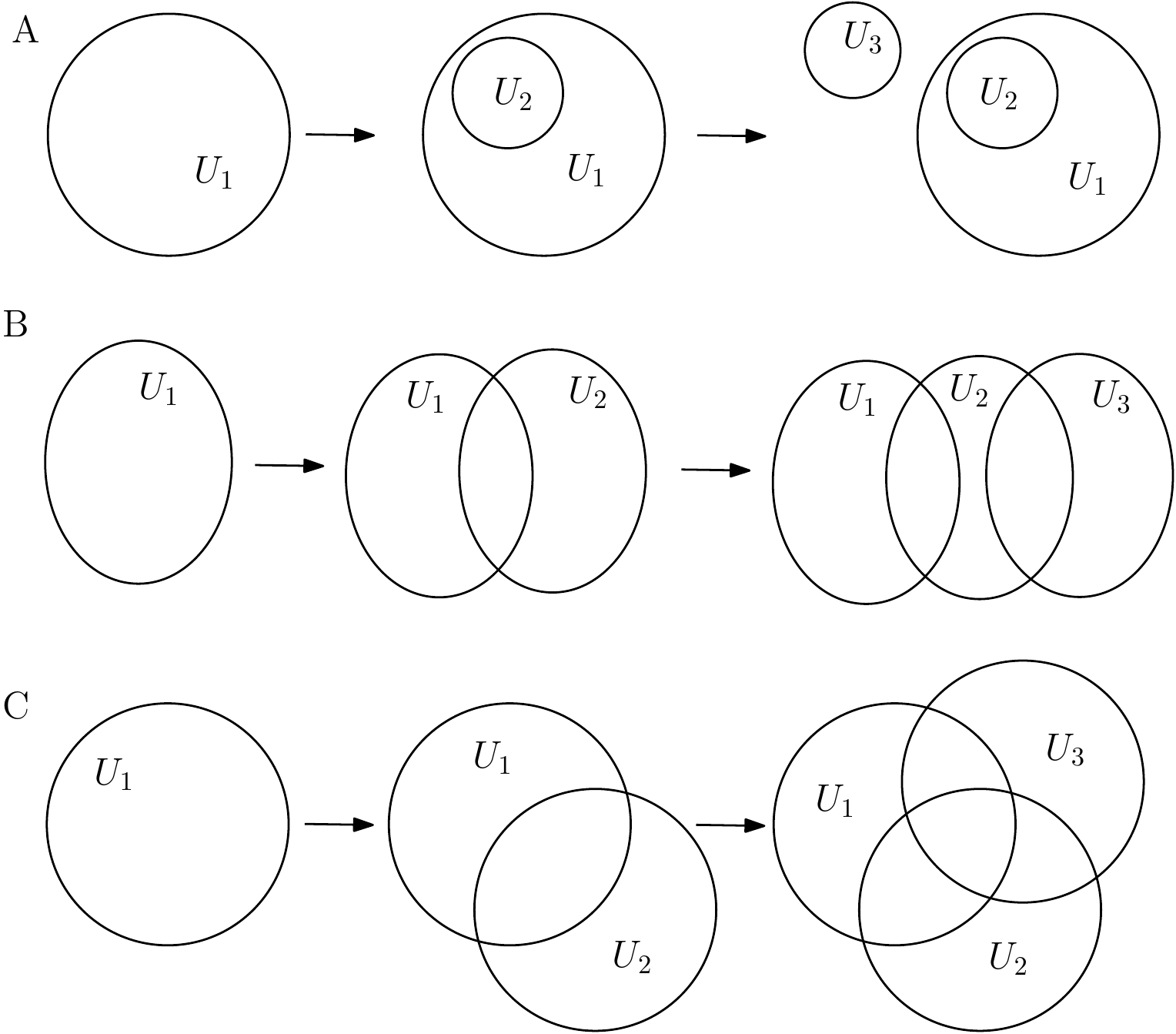}
\end{center}
\caption{Examples of 0, 1, and 2 piercings. (A) Constructing the code $\{\varnothing, 1, 12, 3\}$ in two steps by 0-piercing. (B) Constructing the code $\{\varnothing, 1, 12, 2, 13, 3\}$ in two steps by 1-piercing. See Figure \ref{fig:main_intro} for another example of an inductively 1-pierced code. (C) Constructing the  code $\{\varnothing, 1, 12, 2, 123, 13, 23, 3\}$ in two steps by 1-piercing, then 2-piercing. \label{fig:0_and_2}}
\end{figure}

For instance, the code $\{\varnothing, 1, 12, 2\}$ is inductively $1$ pierced, since we can construct it from the code $\{\varnothing, 1\}$ by 1-piercing the set  $\{1\}$ with respect to the background motif $(\varnothing, \varnothing)$. That is, 
$$\{\varnothing, 1, 12, 2\} = \pierce {\{\varnothing, 1\}} 1 \varnothing \varnothing$$
 The code $\{\varnothing, 1, 12, 2, 123, 13, 23, 3 \}$ is inductively 2-pierced, since we can construct it from $\{\varnothing, 1, 2, 12\}$ by 2-piercing the set $\{1, 2\}$ with respect to the background motif $(\varnothing, \varnothing)$. 
That is, 
$$\{\varnothing, 1, 12, 2, 123, 13, 23, 3 \} = \pierce {\{\varnothing, 1, 12, 2\}} {12} {\varnothing} {\varnothing}$$
 For a realization of this code, see Figure \ref{fig:0_and_2}. Notice that the choice of background motif as well as the set we pierce determines the code constructed:  
 $$\pierce {\{\varnothing, 1, 12, 2\} }  2 1 \varnothing = \{\varnothing, 1, 12, 2,  123, 13 \}$$
 however
 $$\pierce{\{\varnothing,  1, 12, 2\}}  2   \varnothing 1 = \{\varnothing, 1, 12, 2, 23, 3\}$$ 
 Also notice the requirement $\sigma\cup \nu \in \C$ for each $\nu\subset \lambda$. For instance, the code $\C=\{\varnothing, 1, 2\}$ is not 2-pierceable for any choice of $(\lambda, \sigma, \tau)$, since we would have to take $\lambda= \{1, 2\}$, $\sigma = \tau = \varnothing$, but $12\notin \C$. Likewise, we cannot 2-pierce the code $\C=\{\varnothing, 1, 12\}$, since this would require $\nu\in \C$ for all $\nu \subset \{1, 2\}$, but $2\notin \C$. 

Looking at realizations of $k$-pierced codes given in Figure \ref{fig:0_and_2}, we notice a few patterns. In an inductively $0$-pierced code, all receptive fields in the realization are either contained in one another or disjoint. In a code is  inductively 0-pierced if and only if, in any convex realization, boundaries of receptive fields do not intersect. (This is Proposition 2.7 of \cite{gross2016neural}.) In realizations of 1-pierced codes, the boundary of the receptive field most recently added intersects only with the boundary of one other receptive field. However,  this condition is not sufficient to characterize 1-pierced codes. In general, in a realization of a $k$-pierced code, the boundary of a receptive field added at the most recent step intersects with the boundaries of $k$ other receptive fields. Also notice that, in all realizations of inductively pierced codes given so far, the $U_i$ are disks. This is not a coincidence--in the next subsection, we show that this is always possible. 

\subsection{ Geometric realization}

Our pictures hint at the following result :
\begin{proposition} \label{prop:realization}An inductively $k$-pierced code has a convex realization by open balls of dimension $k+1$. In particular, the minimal embedding dimension of an inductively $k$-pierced code is at most $k+1$.\label{thm:real}
\end{proposition}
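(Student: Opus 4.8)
The plan is to induct on the number of neurons $n$. The base case $n = 1$ is immediate, since a code on a single neuron is realized by at most one open ball in $\R^{k+1}$. For the inductive step I would write $\C' = \pierce{\C}{\lambda}{\sigma}{\tau}$ with $\C$ inductively $k$-pierced on $[n-1]$ and $m := |\lambda| \le k$, and start from a realization $\mathcal U = \{U_1, \dots, U_{n-1}\}$ of $\C$ by open balls in $\R^{k+1}$. To keep the induction self-sustaining I would actually prove the stronger assertion that such a realization can be taken to be nondegenerate and in \emph{general position}, meaning that any $\ell \le k+1$ of the bounding spheres meet transversally, so that whenever their common intersection is nonempty it is a smooth $(k+1-\ell)$-sphere. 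The extra coordinate in $\R^{k+1}$ is used precisely here: the $m \le k$ spheres $\partial U_i$, $i \in \lambda$, can then cross transversally in a positive-dimensional locus rather than being forced to a single point, which is exactly the situation that would create a degenerate configuration.

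The core of the argument is the following geometric claim: for a $(\lambda,\sigma,\tau)$-pierceable code realized in general position as above, there is a point $p \in \R^{k+1}$ lying on $\partial U_i$ for every $i \in \lambda$, in the interior of $U_i$ for every $i \in \sigma$, and outside $\overline{U_i}$ for every $i \in \tau$, at which the $m$ tangent hyperplanes to the spheres $\partial U_i$, $i \in \lambda$, are linearly independent. Granting this, I would put $U_n = B(p,\varepsilon)$, the open ball of radius $\varepsilon$ about $p$, and check that $\mathcal U \cup \{U_n\}$ realizes $\C'$ once $\varepsilon$ is small enough. This last check is routine. Since $p$ lies strictly inside the $\sigma$-balls and strictly outside the closed $\tau$-balls, $B(p,\varepsilon) \subseteq \bigcap_{i \in \sigma} U_i \setminus \bigcup_{j \in \tau} \overline{U_j}$, so every atom of $\mathcal U$ that $B(p,\varepsilon)$ meets is of the form $A_{\sigma \cup \nu}$ with $\nu \subseteq \lambda$; because the $m$ tangent hyperplanes at $p$ are independent and $m \le k < k+1$, all $2^m$ of these atoms accumulate at $p$ in full dimension, so $B(p,\varepsilon)$ meets each of them, creating exactly the new atoms $A_{\sigma \cup \nu \cup \{n\}}$; and because $p$ lies on the boundary of each $A_{\sigma \cup \nu}$ rather than in its interior, each such atom also retains points outside $\overline{U_n}$, so no codeword of $\C$ is destroyed. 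Hence $\code(\mathcal U \cup \{U_n\}) = \C \cup \{ \sigma \cup \nu \cup \{n\} : \nu \subseteq \lambda \} = \C'$. A final generic perturbation of $U_n$ (harmless, since the realization is nondegenerate) restores general position, completing the induction; the ``in particular'' clause then follows because a realization by open balls in $\R^{k+1}$ is in particular a convex realization in $\R^{k+1}$.

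The step I expect to be the real obstacle is the geometric claim --- the production of the corner point $p$. Its existence has to be extracted from the combinatorial pierceability hypothesis: the condition $\sigma \cup \nu \in \C$ for all $\nu \subseteq \lambda$ says that all $2^m$ sign patterns of the defining functions of the spheres $\partial U_i$, $i \in \lambda$, occur inside the region $\bigcap_{i \in \sigma} U_i \setminus \bigcup_{j \in \tau} \overline{U_j}$; one then wants connectivity of that region, together with the transversality built into the general-position hypothesis, to force these $m$ spheres to meet in a common point with the prescribed position, and transversality again to arrange that the tangent hyperplanes there are independent after a harmless perturbation of the earlier balls. Isolating exactly the right general-position invariant --- one strong enough to yield $p$ yet provably preserved by the piercing step --- is the technical heart of the proof. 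A more hands-on alternative, which sidesteps this bookkeeping, is to build the realization directly along the defining sequence of piercings, placing each new ball by explicit coordinates chosen so that the corners needed for all possible future piercings remain available.
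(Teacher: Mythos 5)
Your overall skeleton matches the paper's: induct on $n$, strengthen the inductive hypothesis, locate a ``corner'' point $p$ on $\bigcap_{i\in\lambda}\partial U_i$ inside the background region, and take $U_n$ to be a small ball around $p$. But you explicitly flag the production of $p$ as an unresolved obstacle, and that is exactly where the gap is. The general-position invariant you propose to carry in the induction (transversality of any $\le k+1$ bounding spheres) is not strong enough to conclude the geometric claim: transversality only controls \emph{how} the spheres meet when they do meet, not \emph{whether} they meet, and not whether their common intersection lands inside $\bigcap_{i\in\sigma}U_i\setminus\bigcup_{j\in\tau}\overline{U_j}$. Your heuristic that ``all $2^m$ sign patterns occur in a connected region, so the zero loci must have a common point there'' is false even for two smooth hypersurfaces in the plane (e.g.\ two disjoint curves can still exhibit all four sign patterns), so some additional structure specific to the inductively built configuration is needed, and your write-up does not supply it.

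The paper closes this exact gap in a different way: rather than trying to \emph{derive} $p$ from a geometric-genericity condition plus pierceability, it builds the existence of $p$ directly into the inductive hypothesis. Concretely, it proves by induction that one can choose a ball realization of $\C$ such that for \emph{every} partition $(\lambda,\sigma,\tau)$ making $\C$ pierceable, a point $p\in\bigl(\bigcap_{i\in\lambda}\partial U_i\bigr)\cap\mathrm{int}(B_{\sigma,\tau})$ exists. The inductive step then checks this stronger property is preserved when $U_n$ is added, using a lemma (the analogue of your transversality clause, proved rather than assumed) that the common intersection of $m\le k$ of the $k$-spheres is a $(k-m+1)$-sphere, hence positive-dimensional; this is why choosing $U_n$ small does not exhaust the intersection locus, so future corner points survive. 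Your closing sentence (``build the realization directly along the defining sequence of piercings... so that the corners needed for all possible future piercings remain available'') is in fact a precise description of the paper's strategy, but in your write-up it is offered as a fallback rather than carried out, so the proposal as it stands leaves the technical heart of the argument unproved.
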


\begin{figure}[h!]\includegraphics[width = 3 in]{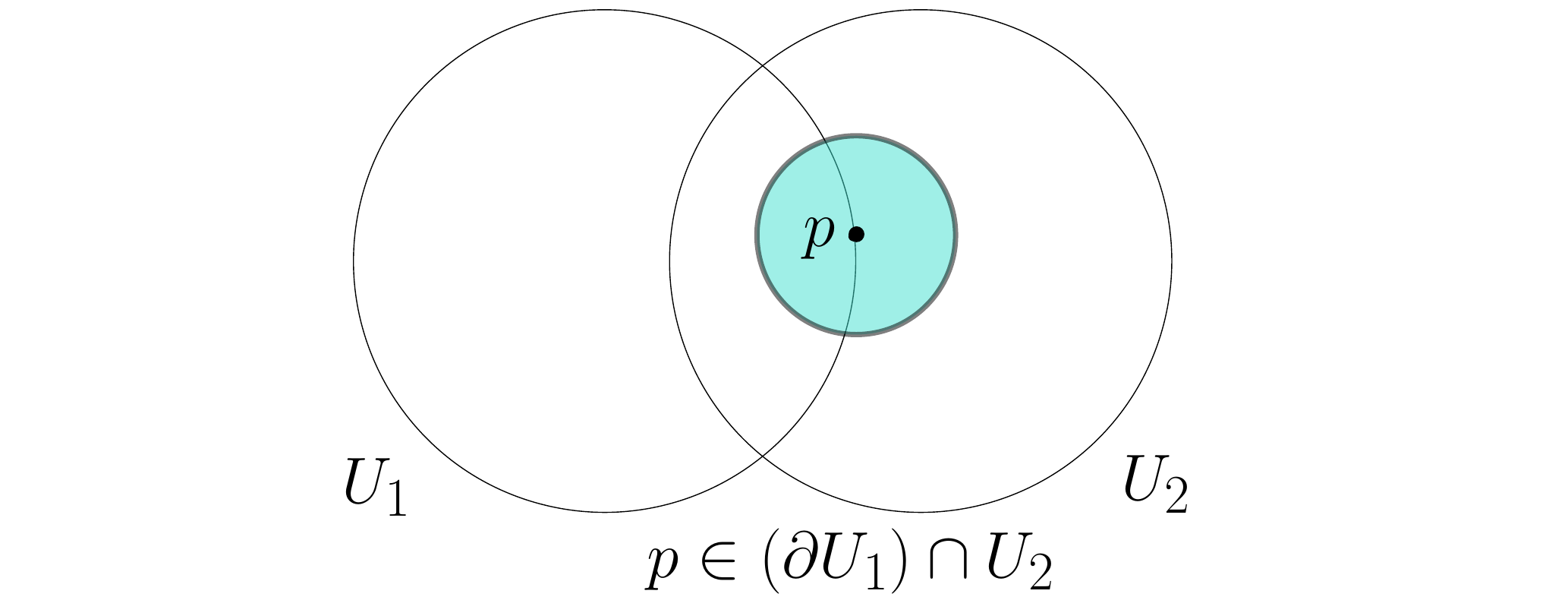}
\caption{To construct a realization of the code $\{\varnothing, 1, 12, 2, 123, 23\}$ from a realization of the code  $\{\varnothing, 1, 12, 2\}$,  we pick a point $p\in (\partial U_1) \cap U_2$ and let $U_3$ be a neighborhood of $p$ contained within $U_2$. }
\end{figure}

We give a detailed construction and proof in Appendix  \ref{app:balls}. Here, we give a proof sketch and a few examples of the construction. As the term ``inductively pierced" might suggest, we construct realizations of inductively pierced codes step by step, beginning with a realization of the one neuron code $\{\varnothing, 1\}$.  

\begin{figure}[h!]\includegraphics[width = 2 in]{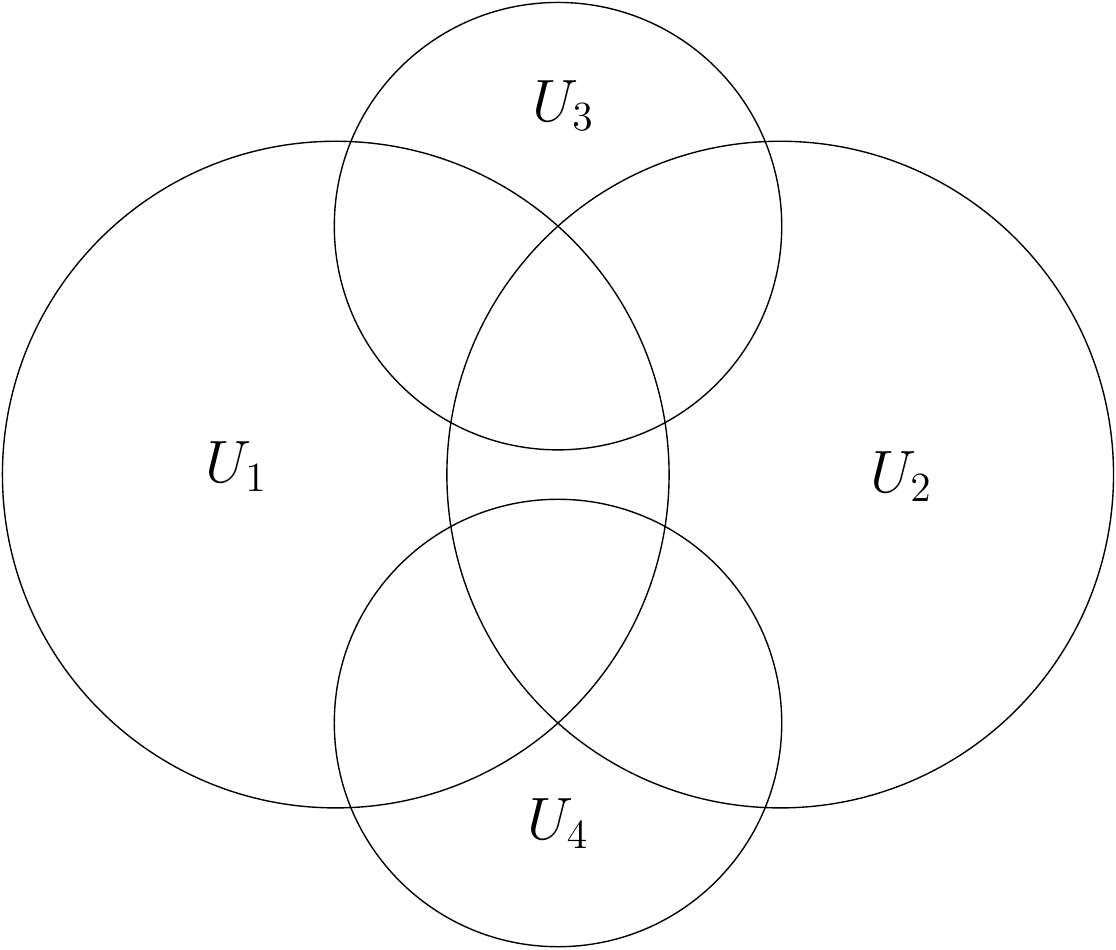}
\caption{An example of a code for which the dimension bound is sharp: $$\C' =  \{\varnothing, 1, 12, 2, 123, 13, 23, 3, 124, 14, 24, 4, 125, 15, 25, 5\}.$$ To see this, we first note that in any realization of the code $\C = \{\varnothing, 1, 12, 2, 123, 13, 23, 3, 124, 14, 24, 4\}$ with disks in $\R^2$, the sets $U_3$ and $U_4$ cover the intersection $\partial U_1\cap \partial U_2.$ However, to construct a realization of the code  $$\pierce \C {12} \varnothing {34} = \{\varnothing, 1, 12, 2, 123, 13, 23, 3, 124, 14, 24, 4, 125, 15, 25, 5\}$$ with disks in $\R^2$, we would need to place the disk $U_5$ over a point of $\partial U_1\cap \partial U_2$ not covered by $U_3$ or $U_4$. \label{fig:dim_bound_sharp} }
\end{figure}
 
Let $\C$ be an inductively $k$-pierced code  on $n-1$ neurons which is $(\lambda, \sigma, \tau)$ pierceable. To construct a realization of $\C' = \pierce \C  \lambda  \sigma\tau$, we begin with a realization of the code $\C$ with open sets $U_1, \ldots, U_{n -1} \subset \R^{k+1}$. Now, choose a point 
$$p\in \bigcap_{i\in \lambda} \partial U_i$$
 such that a small neighborhood around $p$ does not completely cover any atom in the realization of $\C$, and such that $p$ is interior to $\bigcap_{i\in \sigma} U_i\setminus \bigcup_{j\in \tau} U_j$. We take  $U_{n}$ to be a small ball around $p$. This gives a realization of $\C'$. To see this, we first check that any codeword of $\C$ is a codeword of $\code(U_1, \ldots, U_{n})$. This follows because we can choose the ball small enough not to cover up any atom $A_c$ for $c\in \C$. We next note that that since $p$ is on the boundary of $U_{i}$ for each $i\in \lambda$, $U_{n}$ meets each atom $A_{\sigma \cup \nu}$ for each $\nu\subset \sigma$, provided the boundaries are sufficiently nice. 

 In the detailed proof in Appendix \ref{app:balls}, we show that it is always possible to find such a point $p$ and a small open neighborhood $U_i$. This dimension bound is tight because, when we try to realize a $k$-pierced code in dimension $\ell < k+1$, it may not be possible to find an appropriate point $p$. For an example, see Figure \ref{fig:dim_bound_sharp}. 

\subsection{Unpacking the canonical form\label{subsec:unpacking}}
In this section, we define the neural ideal and interpret a previous result on neural ideals of inductively pierced codes. Recall that the Stanley-Reisner ideal of a simplicial complex is the ideal of  non-faces of simplicial complex: 
\begin{align*}
I_\Delta = \abk{\prod_{i\in \sigma} x_i | \sigma \in 2^{[n]}\setminus\Delta}\subset k[x_1, \ldots, x_n]
\end{align*}
The neural ideal extends this idea to arbitrary subsets of $\mathcal P([n])$. More specifically, the neural ideal $J_\C$ of a neural code $\C$ is defined as the ideal generated by indicator polynomials of ``non-codewords" of $\C$:
\begin{align*}
J_\C = \abk{\prod_{i\in \sigma} x_i \prod_{j\notin \sigma}(1-x_j) \mid  \sigma \in 2^{[n]}\setminus \C} \subset \mathbb F_2 [x_1, \ldots, x_n].
\end{align*}
We call a polynomial over $\mathbb F_2$ a $\emph{pseudo-monomial}$ if it is a product of terms of the form $x_i$, $(1-x_j)$.  The canonical form $CF(J_\C)$ of a neural ideal is a set of minimal (with respect to divisibility) pseudo-monomials which generate the ideal. That is
\begin{align*}
CF(J_\C) =  \{f\in J_\C | f \mbox{ is a minimal pseudo-monomial}\}.
\end{align*}
These correspond to minimal receptive field relationships. The elements of the canonical form come in three types, given in Table \ref{tab:CF}. 

\begin{table}[h!]
\begin{tabular}{@{}lll@{}}
\toprule
{Type} & Pseudo-monomial                                        & RF Relationship                                              \\ \midrule
Type 1                   & $\prod_{i\in \sigma} x_i $                          & $\bigcap_{i\in \sigma} U_i = \varnothing$                      \\ \midrule
Type 2                   & $\prod_{i\in \sigma}x_i \prod_{j\in \tau} (1-x_j) $ & $\bigcap_{i\in \sigma} U_i  \subseteq \bigcup_{j\in \tau} U_j$ \\ \midrule
Type 3                   & $\prod_{j\in \tau}(1- x_j) $                            & $X \subseteq \bigcup_{j\in \tau} U_j$                          \\ \bottomrule
\\
\end{tabular}
\caption{The three types of relationships in the canonical form $J_\C$. \label{tab:CF}}
\end{table}
 Theorem 3.6 of \cite{gross2016neural} states that if $\C$ is $k$-inductively pierced, then $$CF(J_\C) \subset \{f_{ij}| 1\leq i <j \leq n, f_{ij}\in \{x_ix_j, x_i(1-x_j), x_j(1-x_i)\}\}.$$ In other words, the canonical form of an inductively pierced code is quadratic. This means that all relationships between the $U_i$'s are determined by the pairwise relationships. This has several immediate corollaries. 

 First, inductively pierced codes are intersection complete. This follows from the characterization of canonical forms of inductively pierced codes, given above, and Proposition 3.7 of \cite{curto2018algebraic}, which  states that $\C$ is intersection complete if and only if each Type 2 pseudo monomial  $$\prod_{i\in \sigma} x_i \prod_{i\in \tau} (1-x_i)$$ of $CF(J_\C)$ has $|\tau|\leq 1$. 

Second, the simplicial complex $\Delta(\C)$ of an inductively pierced code is a clique complex. This follows because $\abk{CF^1(J_\C)}$, the ideal generated by the set of monomials in $J_\C$, is precisely the Stanley-Reisner ideal of $\Delta(\C)$. Now, the fact that $CF^1(J_\C)$ is quadratic implies that if $\sigma\subset [n]$ is a non-face of $\Delta(\C)$, there exist some $i, j\in \sigma$ such that $\{i, j\} \notin \Delta(\C)$.  Thus, if $\sigma$ is a clique in the 1-skeleton of $\Delta(\C)$, $\sigma\in \Delta(\C)$. 

%
%
%
%

\section{Simplicial and  polar complexes of inductively pierced codes \label{not_algebra}}
\subsection{Simplicial complexes, collapsibility, and vertex decomposability}
The simplicial complex $\Delta(\C)$ of a code is defined as the minimal simplicial complex containing $\C$. In other words, $\Delta(\C)$ is the simplicial complex consisting of all codewords of $\C$ together with all of their subsets. For instance, if $\C = \{1, 12, \varnothing\}$, $\Delta(\C) = \{1, 2, 12, \varnothing\}$. 

\begin{definition}[Links, deletions]
\
\begin{itemize}
\item  The link of a vertex is a simplicial complex is the subcomplex 
$$\lk_\Delta(v) = \{\sigma \setminus  v | v\in \sigma \in \Delta\}$$  
\item The deletion of a vertex in a simplicial complex is the subcomplex 
$$\mathrm{Del}_{\Delta}(v) = \{\sigma\setminus v| \sigma \in \Delta\}$$
\end{itemize}
\end{definition}

\begin{definition}[collapsible]
Let $\sigma$ be a maximal face (facet) of $\Delta$, $\tau\subsetneq \sigma$. Call $\tau$ a free face of $\Delta$ if it is not contained in any other facet. The complex $\Delta'$ is a \emph{collapse} of $\Delta$ if we can obtain it by removing all $\lambda$, $\tau\subseteq \lambda \subseteq \sigma$ for some free face $\tau$. A simplicial complex is \emph{collapsible} if there is a sequence of collapses which takes it to a point. \end{definition}
Note that every collapsible complex is contractible, but the converse is not true. 

Several definitions of vertex decomposable simplicial complexes exist in the literature. Here, we follow \cite{moradi2013vertex}. 

\begin{definition}[vertex decomposable]
A simplicial complex $\Delta$ is vertex-decomposable if either
\begin{itemize}
\item $\Delta$ is a simplex
\item There exists a vertex $v\in \Delta$ such that 
\begin{itemize}
\item $\lk_\Delta(v)$ is vertex decomposable.
\item $\mathrm{Del}_\Delta (v) = \{\sigma\setminus v | \sigma\in \Delta\}$ is vertex decomposable 
\end{itemize}
\end{itemize} 
\end{definition}

\begin{theorem}
If $\C$ is an inductively pierced code, $\Delta(\C)$ is a disjoint union of vertex decomposable complexes. \label{thm:vertex_decomp}
\end{theorem}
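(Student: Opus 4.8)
The plan is to induct on the number of neurons $n$, using the inductive construction of $k$-pierced codes directly. The base case $n=1$ is trivial: $\Delta(\{\varnothing,1\})$ is the single vertex $\{1\}$, which is a simplex, hence vertex decomposable, and trivially a disjoint union of one such complex. For the inductive step, suppose $\C'=\pierce{\C}{\lambda}{\sigma}{\tau}$ where $\C$ is inductively $k$-pierced on $[n]$ and $\C'$ lives on $[n+1]$. By the inductive hypothesis, $\Delta(\C)$ is a disjoint union $\Delta_1\sqcup\cdots\sqcup\Delta_m$ of vertex decomposable complexes. First I would understand how $\Delta(\C')$ is built from $\Delta(\C)$: the new codewords are exactly $\sigma\cup\nu\cup\{n+1\}$ for $\nu\subseteq\lambda$, so the facets of $\Delta(\C')$ involving $n+1$ all lie in the single simplex on vertex set $\sigma\cup\lambda\cup\{n+1\}$. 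Since $\sigma\cup\lambda\in\C$ (pierceability with $\nu=\lambda$), the simplex on $\sigma\cup\lambda$ was already a face of $\Delta(\C)$, lying in some connected component, say $\Delta_1$. So $\Delta(\C')$ is obtained from $\Delta(\C)$ by replacing $\Delta_1$ with $\Delta_1':=\Delta_1\cup\overline{\sigma\cup\lambda\cup\{n+1\}}$ (the union with the full simplex on those vertices), and leaving $\Delta_2,\ldots,\Delta_m$ untouched. The remaining task is to show $\Delta_1'$ is vertex decomposable and still connected — connectedness is clear since we only glued a simplex onto an existing face.

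For vertex decomposability of $\Delta_1'$, the natural candidate vertex to decompose on is $v=n+1$. Its link is $\lk_{\Delta_1'}(n+1)$, which consists exactly of the faces of the simplex $\overline{\sigma\cup\lambda}$ (since every facet containing $n+1$ is contained in $\overline{\sigma\cup\lambda\cup\{n+1\}}$, and $\sigma\cup\lambda$ is itself a face of $\Delta_1$); hence the link is a full simplex, which is vertex decomposable. The deletion $\mathrm{Del}_{\Delta_1'}(n+1)$ is just $\Delta_1$, since $n+1$ appears in none of the old faces and the new faces not containing $n+1$ are all subsets of $\sigma\cup\lambda$, already present in $\Delta_1$. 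By the inductive hypothesis $\Delta_1$ is vertex decomposable. Therefore $\Delta_1'$ is vertex decomposable by definition, and $\Delta(\C')=\Delta_1'\sqcup\Delta_2\sqcup\cdots\sqcup\Delta_m$ is a disjoint union of vertex decomposable complexes, completing the induction.

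The step I expect to be the main obstacle is verifying the structural claim about $\Delta(\C')$ — specifically, (i) that every facet of $\Delta(\C')$ containing $n+1$ is contained in the simplex on $\sigma\cup\lambda\cup\{n+1\}$, and (ii) that all the \emph{new} faces not containing $n+1$ are already faces of $\Delta(\C)$, so that $\mathrm{Del}_{\Delta_1'}(n+1)=\Delta_1$ exactly. Claim (i) follows directly from the definition of piercing: new codewords are $\sigma\cup\nu\cup\{n+1\}$ with $\nu\subseteq\lambda$, so their subsets containing $n+1$ are of the form $\sigma'\cup\nu'\cup\{n+1\}$ with $\sigma'\subseteq\sigma$, $\nu'\subseteq\lambda$, all inside $\overline{\sigma\cup\lambda\cup\{n+1\}}$. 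Claim (ii): a subset of a new codeword not containing $n+1$ is $\sigma'\cup\nu'\subseteq\sigma\cup\lambda$, and since $\sigma\cup\lambda\in\C$ by pierceability, such a set is in $\Delta(\C)$. One should also double-check that the gluing is into a single component — i.e. that $\sigma\cup\lambda$ and $\sigma$ lie in the same $\Delta_i$ — which holds because $\sigma\subseteq\sigma\cup\lambda$ and a face and its subface are always in the same connected component. I would also remark that since vertex decomposable complexes are collapsible and hence contractible, this recovers the weaker statement that $\Delta(\C)$ is a disjoint union of contractible complexes as an immediate corollary.
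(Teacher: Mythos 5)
Your proof takes essentially the same approach as the paper: induct on $n$, decompose on the new vertex $n+1$, observe that $\lk(n+1)$ is a full simplex on $\sigma\cup\lambda$ and $\mathrm{Del}(n+1)=\Delta(\C)$, and conclude using the inductive hypothesis; you are slightly more explicit than the paper about which connected component is affected. The one thing to fix is the edge case $\sigma\cup\lambda=\emptyset$ (a ``$0$-piercing into empty background''), which the paper treats separately: there the only new codeword is $\{n+1\}$, so $\Delta(\C')=\Delta(\C)\sqcup\{n+1\}$, the new vertex is an isolated point, and your remark that ``connectedness is clear since we only glued a simplex onto an existing face'' fails because the gluing face is $\emptyset$. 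Your vertex-decomposability computation for $\Delta_1'$ still works verbatim (the link is the $(-1)$-simplex $\{\emptyset\}$, a simplex, and the deletion is $\Delta_1$), but since $\Delta_1'$ is then disconnected, you should either drop the connectedness claim or, cleaner, just handle $\sigma\cup\lambda=\emptyset$ as its own case by noting $\{n+1\}$ is a new vertex-decomposable component.
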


\begin{proof} We prove this statement via induction on $n$, the number of neurons.  As a base case, note that if $\C$ is a neural code on $1$ neuron, then $\C$ is inductively pierced and $\Delta(C)$ is a 0-simplex and is therefore vertex-decomposable. 

Now, suppose that the theorem holds for all inductively pierced codes on $n$ neurons. 
Let $\C'$ be an inductively pierced code on $n+1$ neurons. Then $\C'$ is a piercing of some inductively pierced code $\C$ on  $n$ neurons. That is, 
$$ \C' = \pierce \C \lambda \sigma \tau.$$ We first consider the case where $\lambda\cup\sigma$ is nonempty.  Then $$\lk_{\Delta(\C')}(n+1) = \Delta(\{\lambda\cup\sigma\})$$ and $$\mathrm{Del}_{\Delta(\C)} (n+1) = \Delta(\C),$$both  by the definition of the piercing operation. Now, by the inductive hypothesis, the connected component of $ \Delta(\C)$ containing $\lambda\cup\sigma$ is vertex decomposable. Therefore, by the definition of vertex decomposability, the connected component of $\Delta(\C')$ containing $\lambda\cup\sigma$ is vertex decomposable. Thus, in this case, $\Delta(\C')$ is the disjoint union of  vertex decomposable complexes. Finally, consider the case where $\lambda\cup \sigma = \emptyset.$ Then $\Delta(\C') = \Delta(\C) \cup \{n+1\}$, which is the disjoint union of vertex decomposable complexes. 
\end{proof}

Since any vertex-decomposable complex is collapsible, we obtain the corollary 

\begin{cor} If $\C$ is an inductively pierced code, $\Delta(\C)$ is the disjoint union of collapsible complexes. 
\end{cor}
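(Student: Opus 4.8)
The plan is to prove this directly, by strengthening the induction behind Theorem~\ref{thm:vertex_decomp} rather than by invoking the implication ``vertex decomposable $\Rightarrow$ collapsible.'' The key observation is that the induction in that proof delivers more than vertex decomposability: writing $\C' = \pierce{\C}{\lambda}{\sigma}{\tau}$ with $\C$ inductively pierced on $n$ neurons and the new neuron labelled $n+1$, one has $\mathrm{Del}_{\Delta(\C')}(n+1) = \Delta(\C)$ while $\lk_{\Delta(\C')}(n+1) = \Delta(\{\lambda\cup\sigma\})$ is a \emph{full simplex}. Equivalently, $\Delta(\C') = \Delta(\C)\cup 2^{S}$ with $S = \{n+1\}\cup\lambda\cup\sigma$ and $2^{S}\cap\Delta(\C) = 2^{\lambda\cup\sigma}$: that is, $\Delta(\C')$ is obtained from $\Delta(\C)$ either by gluing a simplex along one of its facets (when $\lambda\cup\sigma\neq\varnothing$) or by adjoining a disjoint vertex (when $\lambda\cup\sigma=\varnothing$). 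Collapsibility of each connected component then falls out.

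Concretely, I would induct on $n$, proving the sharper statement that every connected component of $\Delta(\C)$ is collapsible; the base case $n=1$ is a single point. In the inductive step, if $\lambda\cup\sigma=\varnothing$ then $\Delta(\C') = \Delta(\C)\sqcup\{n+1\}$, so the only new component is a point and all others are handled by the inductive hypothesis. If $\lambda\cup\sigma\neq\varnothing$, then only the component $\Delta_0$ of $\Delta(\C)$ containing the face $\lambda\cup\sigma$ changes, becoming $\Delta_0' = \Delta_0\cup 2^{S}$, and it suffices to show that $\Delta_0'$ collapses onto $\Delta_0$, since $\Delta_0$ is collapsible by induction. For this I would use that $\lk_{\Delta_0'}(n+1) = 2^{\lambda\cup\sigma}$ is a nonempty simplex, hence collapsible: fix a sequence of elementary collapses taking $2^{\lambda\cup\sigma}$ down to a single vertex $p$, prepend $n+1$ to every face occurring in it to obtain a sequence of elementary collapses of $\Delta_0'$, and finish with the elementary collapse of the whisker edge $\{n+1,p\}$ through the free vertex $\{n+1\}$; what remains is precisely $\Delta_0$.

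The one point requiring care is the previous sentence: one must check that an elementary collapse of the abstract simplex $2^{S}$ is still an elementary collapse of $\Delta_0'$ after $\Delta_0$ has been glued on along the facet $\lambda\cup\sigma$. This works because every free face used in the collapsing sequence contains the vertex $n+1$, and any face of $\Delta_0'$ that contains $n+1$ lies only in faces of the closed star $\overline{\mathrm{star}}_{\Delta_0'}(n+1) = 2^{S}$, so its freeness is detected entirely within the star; I would spell this out in a line or two. I would also append a brief remark on why I do not use the ostensibly shorter route through Theorem~\ref{thm:vertex_decomp}: under the recursive definition of vertex decomposability used here, a pair of disjoint points (take $v$ with $\lk_\Delta(v) = \{\varnothing\}$) and likewise the boundary complex of a triangle (homeomorphic to $S^1$) are vertex decomposable yet not collapsible, so that route would require an extra convention ruling out the empty simplex as a base case; the direct argument sidesteps this because the links arising from a piercing are genuine simplices on nonempty vertex sets, the one degenerate link occurring exactly in the $\lambda\cup\sigma=\varnothing$ step, which is disposed of separately.
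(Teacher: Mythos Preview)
Your argument is correct and takes a genuinely different route from the paper's. The paper deduces the corollary in a single line from Theorem~\ref{thm:vertex_decomp} via the blanket implication ``vertex decomposable $\Rightarrow$ collapsible''; you instead re-run the induction and explicitly collapse the newly attached simplex $2^{S}$ onto its codimension-one face $2^{\lambda\cup\sigma}\subseteq\Delta(\C)$, then finish by removing the whisker $\{n+1,p\}$. The verification that the transported collapses remain elementary in $\Delta_0'$ (because every face containing $n+1$ lies in the closed star $2^{S}$) is exactly the point that needs checking, and you flag it correctly. Your closing remark is also apt: under the recursive definition as literally written in the paper, allowing the empty simplex $\{\varnothing\}$ as a base case would make two disjoint points, and hence the boundary of a triangle, ``vertex decomposable'' without being collapsible, so the one-line deduction does hinge on a convention. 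What the proof of Theorem~\ref{thm:vertex_decomp} actually shows is that each connected component of $\Delta(\C)$ is \emph{non-evasive} (the same recursion with a nonempty simplex as base case), and non-evasive complexes are indeed collapsible by a classical argument; your proof is essentially that classical argument specialised to the situation at hand, where the link at the new vertex is an honest simplex. Both approaches land the result, but yours is self-contained and sidesteps the need to pin down the base case.
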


In light of the results of Subsection \ref{subsec:unpacking}, the simplicial complex of an inductively pierced code is a vertex decomposable (hence, collapsible) clique complex. 
%

\subsection{The polar complex of an inductively pierced code is shellable\label{sec:polar}}
\begin{definition}[$\Gamma(\C)$]
The \emph{polar complex} of a neural code $\C$ is the simplicial complex $\Gamma(\C)$ with vertex set $\{1, 2, \ldots, n, \bar 1, \bar 2, \ldots, \bar n\}$  and facets $$\left \{\left(\bigcup_{i\in c} i\right) \cup \left(\bigcup_{j\notin c} \bar j\right)\mid c\in \C\right\}.$$
\end{definition}

For instance, the polar complex of  $\C = \{1, 12, \varnothing\}$ is $\Gamma(\C) = \Delta(\{1\bar 2, 12, \bar 1\bar2\})$. The polar complex of $\{\varnothing, 1,12, 2, 123, 13\}$ is the simplicial complex with facets $\{\bar 1\bar 2\bar 3, 1\bar 2 \bar 3, 12\bar 3, \bar 1 2 \bar 3, 123, 1\bar 2 3\}$. For a geometric realization of this complex, see Figure \ref{fig:polar_complex_ex}. Notice that when $\C$ is a code on $n$ neurons, $\Gamma(\C)$ is always a subset of the $n$-dimensional cross polytope. In particular, on two neurons $\Gamma(\C)$ is a subset of the square and on  three neurons $\Gamma(\C)$ is a subset of the octahedron.  Notice that while $\Delta(\C)$ discards information about the non maximal codewords of $\C$, $\Gamma(\C)$ retains the full data of $\C$. 

\begin{figure}
\includegraphics[width = 1.5 in] {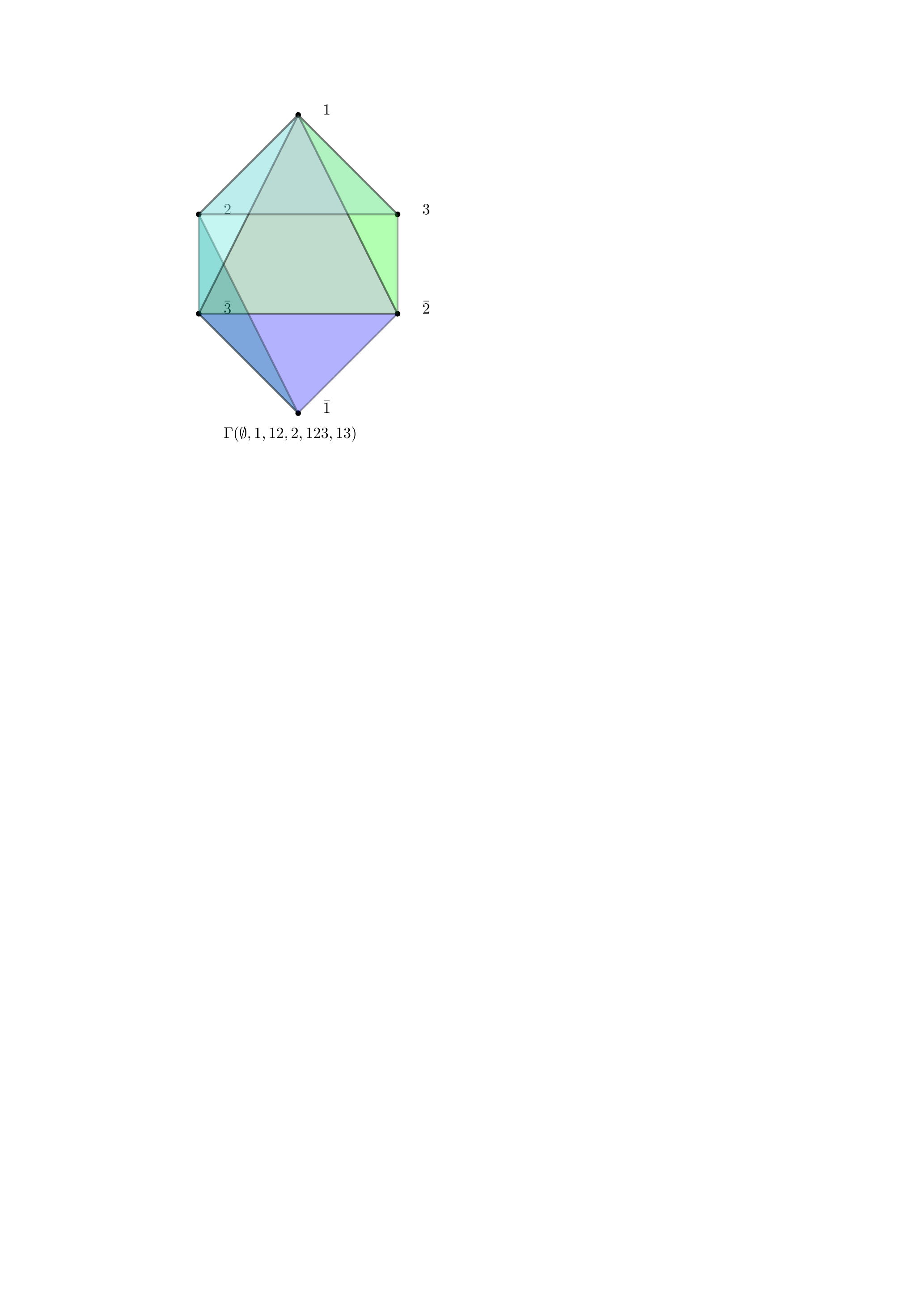}
\caption{The polar complex of $\{\varnothing, 1, 12, 2, 123,  13\}$. \label{fig:polar_complex_ex} }
\end{figure}

\begin{definition}
A pure $n$ dimensional simplicial complex is \emph{shellable} if there exists an ordering  $F_1, \ldots, F_m$ of its facets such that for each $1\leq k\leq m$, the complex $$\left(\bigcup_{1\leq i< k}{F_i}\right)\cap F_k$$ is pure and $n-1$ dimensional.
\end{definition}

\begin{theorem}\label{thm:shellability_2}
If $\C$ is an inductively pierced code,  $\Gamma(\C)$ is shellable. 
\end{theorem}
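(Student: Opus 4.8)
The plan is to prove shellability of $\Gamma(\C')$ by induction on the number $n$ of neurons, mirroring the structure of the proof of Theorem \ref{thm:vertex_decomp}. The base case $n=1$ is trivial: $\Gamma(\{\varnothing,1\})$ is two disjoint points, equivalently the boundary of a $1$-simplex with two facets $\{1\},\{\bar 1\}$, which is vacuously shellable. For the inductive step, write $\C' = \pierce{\C}{\lambda}{\sigma}{\tau}$ where $\C$ is an inductively $k$-pierced code on $[n]$, and suppose we have a shelling $F_1,\dots,F_m$ of $\Gamma(\C)$. First I would observe that $\Gamma(\C')$ has vertex set $\{1,\dots,n+1,\bar 1,\dots,\overline{n+1}\}$ and that its facets split into two groups: the ``old'' facets $F_i \cup \{\overline{n+1}\}$ for each $c\in\C$ (since $n+1\notin c$ for $c\in\C$), and the ``new'' facets $G_\nu := \bigl(\bigcup_{i\in\sigma\cup\nu} i\bigr)\cup\bigl(\bigcup_{j\notin \sigma\cup\nu\cup\{n+1\}}\bar j\bigr)\cup\{n+1\}$ for each $\nu\subseteq\lambda$. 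Note all these facets are pure of dimension $n$ (they have $n+1$ vertices), so purity is automatic and I only need to control the intersection condition.

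The shelling order I would propose is: first list all the old facets $F_1\cup\{\overline{n+1}\},\dots,F_m\cup\{\overline{n+1}\}$ in the inherited order, then list the new facets $G_\nu$ in some order to be chosen. For the old facets, the intersection $\bigl(\bigcup_{j<i}(F_j\cup\{\overline{n+1}\})\bigr)\cap (F_i\cup\{\overline{n+1}\})$ equals $\bigl((\bigcup_{j<i}F_j)\cap F_i\bigr)\cup\{\overline{n+1}\}$ restricted appropriately — more precisely it is the cone over $(\bigcup_{j<i}F_j)\cap F_i$ with apex $\overline{n+1}$, which is pure of dimension $(n-2)+1 = n-1$ because the shelling of $\Gamma(\C)$ guarantees $(\bigcup_{j<i}F_j)\cap F_i$ is pure of dimension $n-2$. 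So the old block shells. The work is in the new block: when we add $G_\nu$, its intersection with everything previously listed is $\Bigl(\bigl(\bigcup_{c\in\C}(\text{facet of }c)\bigr)\cup\bigcup_{\nu'\text{ earlier}}G_{\nu'}\Bigr)\cap G_\nu$. Here I would use pierceability: $\sigma\cup\nu\in\C$ for every $\nu\subseteq\lambda$, so the old facet corresponding to $c=\sigma\cup\nu$, namely $\bigl(\bigcup_{i\in\sigma\cup\nu}i\bigr)\cup\bigl(\bigcup_{j\notin\sigma\cup\nu}\bar j\bigr)$, together with $\{\overline{n+1}\}$, meets $G_\nu$ in the codimension-one face $G_\nu\setminus\{n+1\}$ obtained by swapping $n+1$ for $\overline{n+1}$. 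This single facet already witnesses that $G_\nu$ shares an $(n-1)$-face with the earlier part of the complex.

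The remaining point is to check that the full intersection $(\bigcup_{\text{earlier}}F)\cap G_\nu$ is \emph{pure} of dimension $n-1$, i.e.\ that every maximal face of this intersection has $n$ vertices. The face $G_\nu\setminus\{n+1\}$ is one such; I need to argue every other facet of $\Gamma(\C')$ meeting $G_\nu$ does so in a face contained in some $n$-vertex face of the intersection. Concretely, a facet $H$ meets $G_\nu$ in $H\cap G_\nu$, and I want this contained in an $(n-1)$-dimensional face already accounted for. For $H = G_{\nu'}$ another new facet, $G_\nu\cap G_{\nu'}$ is obtained by keeping $n+1$, keeping all of $\sigma$ and all $\bar j$ with $j\notin\sigma\cup\lambda\cup\{n+1\}$, and then on the coordinates of $\lambda$ keeping $i$ where $\nu,\nu'$ agree on having $i$ and $\bar i$ where they agree on not having $i$ — this is a face of dimension $n - 1 - |\nu\triangle\nu'|$, which is $\le n-1$, and it is contained in $G_\nu\cap(\text{old facet for }\sigma\cup\nu') $-type faces; I would need to verify it lies inside the $(n-1)$-face coming from some already-listed old facet (e.g.\ the one for $\sigma\cup\nu'$), which holds provided we have already listed that old facet — and we have, since all old facets precede all new ones. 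For $H$ an old facet $F\cup\{\overline{n+1}\}$ with $F$ the facet of some $c\in\C$, the intersection $(F\cup\{\overline{n+1}\})\cap G_\nu$ omits the possibility of containing both $n+1$ and $\overline{n+1}$... wait, $G_\nu$ contains $n+1$ not $\overline{n+1}$ and $F\cup\{\overline{n+1}\}$ contains $\overline{n+1}$ not $n+1$, so the intersection is $F\cap G_\nu \subseteq [n]\cup\overline{[n]}$, which is a proper subface of $G_\nu\setminus\{n+1\}$ together with possibly nothing new; this is contained in $G_\nu\setminus\{n+1\}$, already an $(n-1)$-face of the intersection. So the main obstacle — and the step I expect to be genuinely delicate — is organizing the new facets $G_\nu$ into an order and bookkeeping which earlier $(n-1)$-faces absorb each pairwise intersection $G_\nu\cap G_{\nu'}$; I would handle this by ordering the $\nu\subseteq\lambda$ so that $\nu$ comes after all $\nu'\subsetneq\nu$ (any linear extension of inclusion, e.g.\ by cardinality), and then checking that for $\nu'\subsetneq\nu$ the face $G_\nu\cap G_{\nu'}$ lies in the $(n-1)$-face $(G_\nu\setminus\{i\})\cup\{\bar i\}$ for any $i\in\nu\setminus\nu'$, which is itself a facet-intersection already realized because $(G_\nu\setminus\{i\})\cup\{\bar i\}$ equals $G_{\nu\setminus i}$... and if $\nu\setminus i$ has been listed earlier this is immediate. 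Once this combinatorial absorption is verified, purity of each step follows and the induction closes.
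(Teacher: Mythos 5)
Your proof is correct and takes essentially the same route as the paper: induct on $n$, shell the ``old'' facets (the closed star of $\overline{n+1}$, which is the cone $\overline{n+1}*\Gamma(\C)$) first using the inductive hypothesis, then shell the ``new'' facets $G_\nu$ (the open star of $n+1$), using the fact that for every $G_\nu$ the intersection with the entire old block is exactly the codimension-one face $G_\nu\setminus\{n+1\}$, while the $G_\nu$ among themselves shell via the cross-polytope structure on the $\lambda$-coordinates. The only cosmetic difference is that you order the new facets by \emph{increasing} $|\nu|$ (a linear extension of inclusion) and check the absorption of $G_\nu\cap G_{\nu'}$ into the faces $G_\nu\setminus\{i\}$ by hand, whereas the paper invokes Lemma~\ref{lem:weight_order} on weight orderings of the cross polytope and specifically uses \emph{decreasing} weight so that the resulting order $<$ can be reused as a monomial order in Section~\ref{algebra}; both directions work, and your verification closes without gaps.
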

Before proving this theorem, we illustrate it with an example. Let $\C = \{\varnothing, 1, 12, 2, 123, 23 \}$, note that $\C$ is inductively pierced and $\Gamma(\C)$ has facets $$\{ \bar 1\bar 2\bar 3,  1\bar2\bar 3, 12\bar3, \bar12\bar3, 123, \bar123\}.$$ 
We shell $\Gamma(\C)$ by first shelling $\Gamma(\C)$ down to the cone over $$\Gamma(\{\varnothing, 1, 12, 2\}) = \{\bar 1\bar 2, 1\bar 2, 12, \bar 12\}$$
and then shell this cone by induction. More precisely: notice that $$\lk(3) = \{\bar12, 12\},$$ which is the cone over the 0-dimensional cross polytope $\{1, \bar 1\}$. Use a shelling order for the 0-dimensional cross polytope to induce an order on the facets of $\Gamma(\C')$ which include $3$. That is, pick $F_5 = 123,  F_ 6 = \bar123$. 
Notice that the intersection of $F_6$ with $ F_1\cup  \cdots \cup F_5$ is $\{23, \bar12\}$ and the intersection of $F_5$ with $F_1\cup \cdots\cup F_4$ is $12$, both of which are pure and 1 dimensional. So, removing these two facets last is valid. Once we do this, we have shelled down to the cone over $\Gamma(\C)$. By induction, we have the shelling order  $F_ 1 = \bar 1\bar 2, F_2 = 1\bar 2, F_3 = 1 2, F_4 = \bar 12$ for $\Gamma(\C)$. Putting these pieces together gives us the shelling order $F_ 1 = \bar 1\bar 2\bar 3, F_2 = 1\bar 2\bar 3, F_3 =1 2 \bar 3, F_4 =\bar 1 2 \bar 3, F_5 = 123, F_ 6 =\bar 123$ for $\Gamma(\C')$. 

Now, we begin the proof  that the polar complex of an inductively pierced code is shellable by defining the shelling order $<$ on the facets of $\Gamma(\C)$.  Since facets of $\Gamma(\C)$ correspond to codewords of $\C$, $<$ is an order on the codewords of $\C$. We will use this order again in Section \ref{algebra} to define the monomial order used in \suspect{Theorem} \revision{Conjecture} \ref{thm:grob}.

Note that $\max(c)$ gives the highest index of a neuron in $c$. If $\C$ is inductively pierced, this corresponds to the piercing step at which $\C$ was added. We define the relation $c<d$ as follows: 

\begin{itemize}
\item The codeword added first comes first: $c<d$ if $\max(c) < \max(d)$
\item  If two codewords are added at the same piercing step, we break the tie by putting the codeword of higher weight first: $c<d$ if $\max(c) = \max(d)$ and $|c|>|d|$
\item   If two codewords are added at the same piercing step and have the same weight, we break the tie using the lexicographic order.: $c<d$ if  $\max(c) = \max(d)$, $|c|=|d|$,  and $c<d$ lexicographically.
\end{itemize}

For instance, $<$ induces the order  $1< 12< 2< 123 < 23$ on $\C = \{\varnothing, 1, 12, 2, 123, 23\}$. The requirement that ties between codewords of the same weight added at the same step are broken lexicographically is not used in any of our proofs; any way of breaking this tie will work. 

We state a few definitions and results pertaining to shellability which we will use in these proofs. We will use the following characterization of a shelling order: $F_1, \ldots, F_m$ is a shelling order if and only if for each $\sigma \in F_i\cap F_j$, $i< j$, if $\dim \sigma < \dim F_j-1$, there exists $\ell < j$, $\tau = F_\ell\cap F_j$, $\sigma \subset \tau$, $\dim \tau = \dim F_j-1$ \cite{kozlov2007combinatorial}.  

We will also use the notion of a simplicial join and the elementary fact that the simplicial join of shellable complexes is shellable \cite{bjorner1997shellable}. 

\begin{definition}[Simplicial Join]
Let $\Delta_1$, $\Delta_2$ be abstract simplicial complexes. The simplicial join of $\Delta_1$ and $\Delta_2$ is the simplicial complex $\Delta_1*\Delta_2 =  \{\sigma\cup \tau \mid \sigma\in \Delta_1, \tau\in \Delta_2\}$. 
\end{definition}

\begin{lemma}\label{lem:weight_order}Let the weight of a facet of $\Gamma(\C)$ be the number on-vertices it contains. Any ordering of the facets of the $n$ dimensional cross polytope $\Gamma(2^{[n]})$  by weight is a shelling order. 
\end{lemma}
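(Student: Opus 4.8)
### Proof proposal for Lemma \ref{lem:weight_order}

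The plan is to exhibit the weight‑order as a shelling directly, using the combinatorial characterization of shelling orders quoted just above (the one from \cite{kozlov2007combinatorial}): an ordering $F_1,\dots,F_m$ of the facets of a pure complex is a shelling order iff whenever $i<j$ and $\sigma\in F_i\cap F_j$ with $\dim\sigma<\dim F_j-1$, there is some $\ell<j$ with $\tau=F_\ell\cap F_j$ satisfying $\sigma\subseteq\tau$ and $\dim\tau=\dim F_j-1$. So first I would set up notation: the $n$-dimensional cross‑polytope $\Gamma(2^{[n]})$ has $2^n$ facets, each obtained by choosing, for every coordinate $i\in[n]$, either the ``on'' vertex $i$ or the ``off'' vertex $\bar i$; its facets are exactly the polar facets of the full code $2^{[n]}$. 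Each facet has $n$ vertices, one per coordinate, so the complex is pure of dimension $n-1$. The weight of a facet is the number of ``on'' vertices among its $n$ vertices, and I order facets by nondecreasing weight, with ties broken arbitrarily.

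Next I would analyze $F_i\cap F_j$ for two facets. Say $F_j$ corresponds to the sign vector (codeword) $d\subseteq[n]$ and $F_i$ to $c\subseteq[n]$. A vertex of $F_j$ lies in $F_i$ precisely when the two sign vectors agree on that coordinate; so $F_i\cap F_j$ is the face of $F_j$ spanned by the coordinates where $c$ and $d$ agree, and $\dim(F_i\cap F_j)=|\{i:c_i=d_i\}|-1=n-1-|c\triangle d|$. Thus $\dim\sigma<\dim F_j-1$ means $\sigma$ omits at least two of the agreement coordinates of some such pair, or more simply: given any face $\sigma\subsetneq F_j$ with $\dim\sigma\le n-3$, I need to find a facet $F_\ell$ earlier than $F_j$ (i.e.\ of weight $\le \mathrm{wt}(d)$, and if equal weight then earlier in the tie‑break) whose intersection with $F_j$ is a codimension‑one face of $F_j$ containing $\sigma$. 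The natural candidate: $\sigma$ corresponds to a partial sign vector defined on some subset $S\subsetneq[n]$ with $|S|=\dim\sigma+1\le n-2$; pick any coordinate $k\in[n]\setminus S$, and let $F_\ell$ be the facet agreeing with $d$ on all coordinates except $k$, where it takes the ``off'' value $\bar k$. Then $F_\ell\cap F_j$ is the codimension‑one face of $F_j$ omitting vertex $k$, which contains $\sigma$ (since $k\notin S$), and $\mathrm{wt}(F_\ell)\le \mathrm{wt}(d)$ with equality only if $d$ already had $\bar k$, in which case $F_\ell=F_j$ — so I must be slightly careful and instead, when $k\notin d$, flip a coordinate the other way or choose $k$ to be a coordinate where $d$ is ``on''.

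The key step, and the one needing the most care, is exactly this last choice of which coordinate to flip so that the witness facet $F_\ell$ is genuinely \emph{earlier} than $F_j$. The clean fix: since $\sigma$ is a proper face, $|S|\le n-1$; if $|S|\le n-2$ there are at least two coordinates outside $S$, so at least one of them, call it $k$, can be chosen with the property that flipping $d$ at $k$ toward ``off'' strictly decreases weight (if $k\in d$) — and since there are $\ge 2$ coordinates available and at most one of them is ``off'' in $d$... this still isn't automatic, so the robust argument is: among coordinates outside $S$, if some is ``on'' in $d$, flip it off (weight strictly drops, $F_\ell<F_j$); if all coordinates outside $S$ are ``off'' in $d$, flip one of them \emph{on} — this increases weight, so instead I should flip one of the agreement coordinates... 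Let me restructure: the correct witness is obtained by taking $\sigma\subseteq\tau\subsetneq F_j$ with $\dim\tau=n-2$, i.e.\ $\tau$ omits exactly one vertex $v$ of $F_j$; I get to choose $v$ among the $\ge 2$ vertices of $F_j$ not in $\sigma$, and I need the unique facet $F_\ell\ne F_j$ with $F_\ell\supseteq\tau$ (namely $F_\ell=\tau\cup\{\bar v'\}$ where $v'$ is $v$ with flipped sign) to have smaller weight; choosing $v$ to be an ``on'' vertex of $F_j$ not in $\sigma$ does this, and such a $v$ exists unless all $\ge 2$ omitted vertices are ``off'', in which case I instead pick $v$ to be an ``off'' vertex not in $\sigma$ and observe that $F_\ell$ has the \emph{same} weight $\mathrm{wt}(d)$ but differs from $F_j$, so I must additionally arrange the tie‑break within a fixed weight class to itself be a shelling — i.e., reduce to: any ordering of the weight-$w$ facets works as long as each has the required witnesses, which it does because within weight $w$ each face $\tau$ of dimension $n-2$ lies in exactly two facets of \emph{consecutive} weights $w-1$ (or $w$, or $w+1$), and the codimension-one faces $\tau$ of $F_j$ of the form ``omit an off-vertex'' pair $F_j$ with a weight-$(w{+}1)$ facet — which is later, not earlier. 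I would therefore present the argument as: every $(n-2)$-face $\tau$ of $F_j$ is of exactly one of two kinds — ``omit an on-vertex'' (its other facet has weight $w-1<w$) or ``omit an off-vertex'' (its other facet has weight $w+1>w$) — and for a proper face $\sigma$ of dimension $\le n-3$, the set of omitted vertices has size $\ge 2$; I claim at least one omitted vertex is ``on'' whenever $d\ne\varnothing$... no — if $d=\{1\}$, $n=3$, $\sigma=\varnothing$, omitted vertices are all three, one of which (vertex $1$) is on. In general $F_j$ has $w$ on-vertices; $\sigma$ uses $\le n-2$ of the $n$ vertices, hence omits $\ge 2$; if $\sigma$ contains all $w$ on-vertices then $w\le n-2$ and the $\ge 2$ omitted are off — here flipping any off-vertex gives weight $w+1$, which is too late, so this case genuinely requires handling inside the weight class. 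The resolution I will use: order within weight class $w$ so that it is itself built up as a shelling of the subcomplex of weight-$\le w$ facets; concretely, verify the restriction criterion ``$R(F_j)=$ set of on-vertices of $F_j$'' gives $h$-vector $\binom{n}{i}$ and appeal to the standard fact (Björner–Wachs) that a pure complex is shellable if there is an ordering whose restriction sets satisfy the interval property. I'll cite \cite{bjorner1997shellable} for this and show the weight order has restriction map $F\mapsto\{\text{on-vertices of }F\}$, for which the required interval condition is immediate since every subset obtained by turning some on-vertices off yields an earlier (lower-weight) facet. This is the cleanest route and the step that demands the most attention is precisely checking that this restriction map is well-defined and order-compatible; everything else is bookkeeping on the cross-polytope.
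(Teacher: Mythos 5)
Your final route via the Bj\"orner--Wachs restriction criterion is valid: taking the restriction of a facet $F$ to be its set of on-vertices does make the weight order a shelling. Any face of $F_j$ that misses some on-vertex $m$ of $F_j$ is contained in the strictly earlier facet $(F_j\setminus\{m\})\cup\{\bar m\}$, while any face of $F_j$ containing every on-vertex of $F_j$ forces any facet containing it to have weight at least that of $F_j$, with equality only for $F_j$ itself, so such a face is genuinely new at step $j$. You should spell out that second half rather than call it immediate, and the aside about the $h$-vector being $\binom{n}{i}$ plays no role in the argument. This is a genuinely different route from the paper, which verifies the characterization from \cite{kozlov2007combinatorial} directly; the two approaches are of comparable length, but yours packages the combinatorics into the restriction map, whereas the paper's produces the witness codimension-one face explicitly.

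The long middle portion of your write-up, however, hunting for which coordinate to flip and worrying about the case where every vertex of $F_j$ omitted by $\sigma$ is an off-vertex, is symptomatic of discarding half the hypothesis. You are handed $\sigma\in F_i\cap F_j$ with $i<j$, but you only ever use $\sigma\subsetneq F_j$. Since $\sigma\subseteq F_i$ as well, $\sigma$ omits every coordinate on which $F_i$ and $F_j$ disagree. And since $F_i\neq F_j$ while (by the nondecreasing weight order) the weight of $F_i$ is at most that of $F_j$, there must be a coordinate $m$ that is an on-vertex of $F_j$ and an off-vertex of $F_i$. That $m$ is an on-vertex of $F_j$ excluded from $\sigma$, so flipping it yields the required earlier witness facet $F_\ell=(F_j\setminus\{m\})\cup\{\bar m\}$. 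In particular the troublesome case you agonize over cannot arise: if every vertex of $F_j$ omitted by $\sigma$ were off, then every on-vertex of $F_j$ would lie in $\sigma\subseteq F_i$, forcing the weight of $F_i$ to be at least that of $F_j$, and equality would force $F_i=F_j$, contradicting $i<j$. This observation is exactly the paper's proof, and had you invoked $\sigma\subseteq F_i$, your first attempt would have closed in two lines without the pivot.
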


\begin{proof}[Proof of Lemma]

Without loss of generality, let $F_1, \ldots, F_{2^n}$ be ordering of the facets which is nondecreasing in weight. Now, let $\sigma\in F_i\cap F_j$, $1\leq i\leq j \leq 2^n$. Let $F_j$ be of weight $k$. If  $\dim \sigma < n-1$, we show that there exists $1\leq \ell \leq j$, $\tau \in F_\ell \cup F_j$ such that $\sigma \subset \tau$ and $\dim \tau  = n-1$. Since our ordering is nondecreasing in weight and $i\leq j$, there is some on-vertex $m$ in $F_j$ whose corresponding off-vertex $\bar m$ is contained in $F_i$. Then let $F_\ell$ be $F_j\setminus m \cup  \bar m$. We have $\ell < j$ because $F_\ell$ is lower weight than $F_j$ by construction. Further, $\tau = F_\ell\cap F_j$ is $n-1$ dimensional by construction. Finally, $\sigma \subset  \tau$, since by construction, all vertices of $\sigma$ are vertices of $F_j$, and neither $m$ nor $\bar m$ is a vertex of $\sigma$, and $\tau$ contains all vertices of $F_j$ other than $m$. Thus $\tau$ is pure $n-1$ dimensional. Thus this is a shelling order. To obtain this result for non increasing orders, flip the labels on the on- and off-vertices.  \end{proof}

Now, we are ready to prove Theorem \ref{thm:shellability_2}, that $<$ gives a shelling order on $\Gamma(\C)$. 


\begin{proof}
We proceed by induction on the number of neurons. As a base case, note that $\bar 1 < 1$ is a shelling order for $\Gamma(\{\varnothing, 1\}) = \{1, \bar 1\}$. 

Now, let $\C'$ be an inductively pierced code on $n$ neurons. Then $\C' =\pierce \C \lambda \sigma \tau$ for some inductively pierced code $\C$ on $n-1$ neurons.  We give a procedure for shelling $\Gamma(\C')$ down to a cone over $\Gamma(\C)$. Let $\Sigma$ be the facet of $\Gamma(\C)$ specified by $(\sigma, \tau)$. That is, $\Sigma = \{i: i\in \sigma\} \cup \{ \bar j : j\in \tau\}$. 

Notice that $\lk(n)$ is the simplicial join of the full $|\lambda|$-dimensional cross polytope on the vertex set $\{i, \bar i |i\in \lambda\}$ and the face $\Sigma$.   The open star of $n$ is the cone of $\lk(n)$ with the point $n$. By Lemma \ref{lem:weight_order} and the fact that $<$ orders codewords by weight, $<$ induces a shelling order $G_1, \ldots, G_{2^k}$ on the open star of $n$. 

Now, since $\C$ is an inductively pierced code on $n-1$ neurons, by the inductive hypothesis, $<$ gives a shelling order on the facets of $\Gamma(\C)$. Since the closed star of $\bar n$ in $\Gamma(\C')$ is the cone over $\lk(\bar n) = \Gamma(\C)$, the closed star of $\bar n$ in $\Gamma(\C')$ is shellable. Let this shelling order be $F_1, \ldots, F_{n-2^{k}}$. We claim that  $F_1, \ldots, F_{n-2^{k}}, G_1, \ldots, G_{2^k}$ is a shelling order for $\Gamma(\C')$. Further, note that this is the order induced by $<$. 

We must check that 
\begin{align}
\left(F_1\cup \cdots\cup F_{i-1}\right)  \cap  F_i \mbox{ is pure and $n-1$ dimensional} \\
\left(F_1\cup  \cdots \cup F_{n-2^{k}}\cup G_1\cup\cdots\cup G_{i-1}\right) \cap  G_i \mbox{ is pure and $n-1$ dimensional} 
\end{align} 
for all appropriate values of $i$. Statement (1) follows from the fact that $F_1, \ldots, F_{n-2^{k}}$ is a shelling order. 

Now, we check statement 2. Notice 
\begin{align*}
\left(F_1\cup \cdots\cup F_{n-2^{k}}\cup G_1\cup \cdots\cup G_{i-1} \right) \cap  G_i  =\\  \left(G_i \cap \left(F_1\cup  \cdots \cup F_{n-2^{k}}\right)\right) \cup \left(G_i  \cap \left(G_1\cup \ldots\cup G_{i-1}\right)\right).
\end{align*}
To check that this is pure and $n-1$ dimensional, we need to prove that each of these intersections is pure and $n-1$ dimensional.  First, we have that  $G_i \cap (G_1\cup \cdots\cup G_{i-1})$ is pure and $n-1$ dimensional because  $G_1, \ldots, G_{2^k}$ is a shelling order for the closed star of $n$. Next, from the definition of piercing, we have that $(F_1\cup\cdots\cup F_{n-2k}) \cup G_i  =( \bar n* \Gamma(\C))\cap G_i= G_i \setminus n$, which is pure and $n-1$ dimensional. Thus, $F_1, \ldots, F_{n-2^{k}}, G_1, \ldots, G_{2^k}$ is a shelling order for $\Gamma(\C')$. 

\end{proof}

\subsection{Inductively pierced codes are hyperplane codes} 
Itskov, Kunin, and Rosen show that if $\C$ is a nondegenerate hyperplane code, $\Gamma(\C)$ is shellable \cite{itskov2018hyperplane}. Further, they showed all other known combinatorial characterizations of nondegenerate hyperplane codes follow from the shellability of $\Gamma(\C)$. 

Since we have shown that inductively pierced codes have shellable polar complexes, it is natural to ask whether inductively pierced codes are nondegenerate hyperplane codes. The answer to this question is yes:

\begin{figure}[!h]
\includegraphics[width = 5  in]{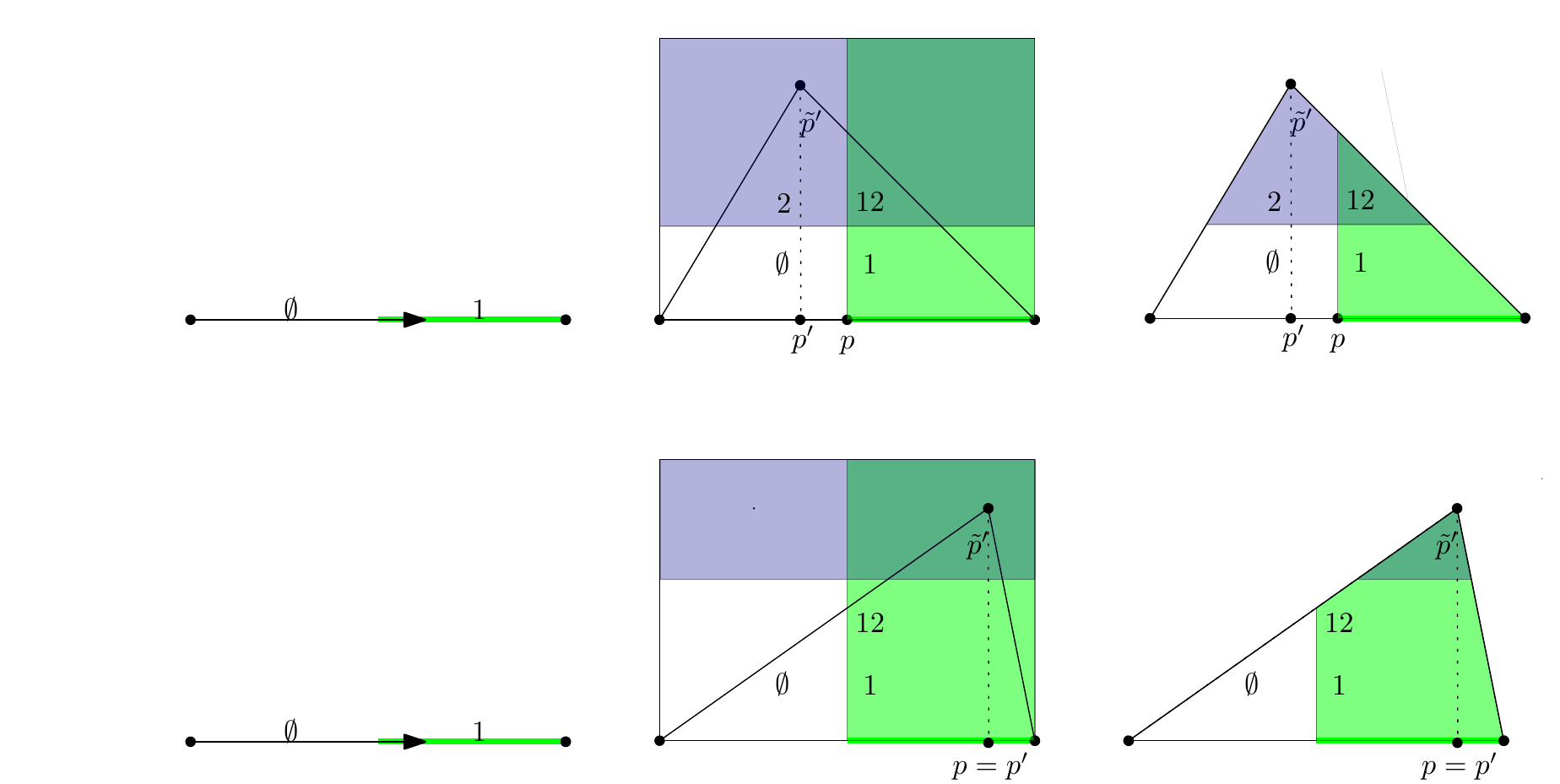}
\caption{Using our construction to give hyperplane realizations of the inductively pierced  codes $\{\varnothing, 1, 12, 2\}$ and $\{\varnothing, 1, 12\}$. \label{fig:hyper_const}} 
\end{figure} 

\begin{theorem} Inductively pierced codes are nondegenerate hyperplane codes. In particular, if $\C$ is an inductively pierced code on $n$ neurons, we can construct an $n$ dimensional nondegenerate hyperplane realization $(\mathcal H, X)$ of $\C$ such that $X$ is an $n$-simplex.   \label{thm:hyperplane}
\end{theorem}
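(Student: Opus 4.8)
The plan is to imitate the inductive ball construction of Proposition~\ref{prop:realization}, building the realization one piercing at a time, but \emph{raising the dimension by one at each step}: because half-spaces are unbounded we cannot enlarge a small ball around the piercing point inside a fixed simplex, so at each step we lift everything into one new coordinate, which lets the new half-space slice off an arbitrarily small corner of a larger simplex. The base case is $\C=\{\varnothing,1\}$: take $X\subset\R^1$ a segment and $H_1^+$ a closed half-line meeting $X$ transversally in a proper nonempty subsegment, a nondegenerate $1$-dimensional realization with $X$ a $1$-simplex.

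For the inductive step, write $\C'=\pierce{\C}{\lambda}{\sigma}{\tau}$ with $\C$ inductively pierced on $n-1$ neurons, and assume a nondegenerate hyperplane realization $H_1^+,\dots,H_{n-1}^+$ of $\C$ whose bounding set $X\subset\R^{n-1}$ is an $(n-1)$-simplex. First I would find a good \emph{piercing point} $p$. Set $L=\bigcap_{i\in\lambda}\partial H_i^+$ and let $R'$ be the open convex set $\mathrm{int}(X)\cap\bigcap_{i\in\sigma}\mathrm{int}(H_i^+)\cap\bigcap_{j\in\tau}\bigl(\R^{n-1}\setminus\overline{H_j^+}\bigr)$; one shows $L\cap R'\neq\varnothing$ and picks $p$ there. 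Because (as I argue below) the hyperplanes $\partial H_i^+$, $i\in\lambda$, are in general position near $p$, a sufficiently small ball about $p$ in $X$ meets exactly the atoms $A_{\sigma\cup\nu}$, $\nu\subseteq\lambda$, and meets all of them. Next, regard $\R^{n-1}$ as $\R^{n-1}\times\{0\}\subset\R^n$, put $v=(p,1)$, let $X'=\mathrm{conv}(X\cup\{v\})$ (an $n$-simplex), extend each $H_i^+$ to $\tilde H_i^+\subset\R^n$ by the same inequality in the first $n-1$ coordinates, and take $H_n^+=\{x\in\R^n: x_n\ge 1-\delta\}$ for a small $\delta>0$. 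Then I would verify $\code(\tilde H_1^+,\dots,\tilde H_{n-1}^+,H_n^+;X')=\C'$: every point of $X'$ projects into $X$, and the slice of $X'$ at height $t$ is the homothet $p+(1-t)(X-p)$ of $X$ toward $p$, so the base facet $\{x_n=0\}\cap X'=X$ already realizes all of $\C$ while slices at small positive heights realize every atom of $\C$ in full dimension; hence the $n$-off part of the code is exactly $\C$ and these atoms remain full-dimensional for $\delta$ small. Meanwhile $\{x_n\ge1-\delta\}\cap X'$ projects onto $p+\delta(X-p)$, a small homothet of $X$ containing $p$ in its interior, which for $\delta$ small lies in the good neighborhood of $p$ and hence meets exactly the atoms $A_{\sigma\cup\nu}$ and all of them, so the $n$-on codewords are precisely $\{\sigma\cup\nu\cup\{n\}:\nu\subseteq\lambda\}$. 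For $\delta$ small the hyperplane $\{x_n=1-\delta\}$ is transversal to the vertical hyperplanes $\partial\tilde H_i^+$ and to every facet of $X'$ it meets, so $(\{\tilde H_i^+\}\cup\{H_n^+\},X')$ is a nondegenerate $n$-dimensional realization of $\C'$ with $X'$ an $n$-simplex, completing the induction.

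The step I expect to be the main obstacle — and would isolate as a lemma — is the existence of the piercing point $p$ with the stated local behaviour. Here one uses pierceability, which says all $2^{|\lambda|}$ sign patterns on $\lambda$ are realized inside $R'$: writing $H_i^+=\{x:\langle a_i,x\rangle\ge b_i\}$, the affine map $x\mapsto(\langle a_i,x\rangle-b_i)_{i\in\lambda}$ sends the open convex set $R'$ onto a convex set meeting every open orthant of $\R^{|\lambda|}$, and a separating-hyperplane argument then forces this map to be onto (so the $a_i$, $i\in\lambda$, are linearly independent, hence $\bigcap_{i\in\lambda}\partial H_i^+$ has codimension $|\lambda|$ and the $\partial H_i^+$ are in general position near every point of $L$) and forces $0$ to lie in its image (so $L\cap R'\neq\varnothing$). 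Granting this, the remaining verifications — the two halves of the code and the stability of the arrangement as $\delta\to 0$, which uses that the given realization is nondegenerate so all of its atoms are full-dimensional — are routine, and I would relegate the detailed construction and these checks to an appendix, as was done for Proposition~\ref{prop:realization}.
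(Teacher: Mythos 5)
Your construction is essentially the one in the paper's Appendix~B: induct on the number of neurons, lift the realization of $\C$ into one new dimension, cone the bounding $(n-1)$-simplex to an $n$-simplex over a point above the piercing locus, and take $H_n^+$ to be a thin slab at the top of the cone. Your isolated lemma on the piercing point is a nice addition the paper leaves implicit: the separating-hyperplane argument showing that a convex set meeting every open orthant contains the origin and that the normals $a_i$, $i\in\lambda$, must be linearly independent is correct and cleanly justifies both $L\cap R'\neq\varnothing$ and the local general position of the $\partial H_i^+$, $i\in\lambda$, near $p$.

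There is, however, one substantive gap relative to the paper. You put the cone apex at $v=(p,1)$ directly above the piercing point $p$. Since $p\in\bigcap_{i\in\lambda}\partial H_i^+$, the vertical extensions $\partial\tilde H_i^+$ for $i\in\lambda$ all pass through $v$, so the apex of your new bounding simplex $X'$ lies on $|\lambda|$ of the arrangement's hyperplanes. The paper flags exactly this coincidence as the degeneracy to be avoided: its construction first replaces $p$ by a nearby generic point $p'$ lying on \emph{no} hyperplane, chooses $p'$ so that a homothet of the base simplex about $p'$ still meets all $2^{|\lambda|}$ required atoms, and only then lifts $p'$ to the apex $\tilde p'$. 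Your nondegeneracy justification (``$\{x_n=1-\delta\}$ is transversal to the vertical hyperplanes and to every facet of $X'$ it meets, so the realization is nondegenerate'') addresses only the new hyperplane $H_n$; it says nothing about the coincidence at $v$ between the simplex vertex and the hyperplanes $\partial\tilde H_i^+$, $i\in\lambda$, which is a property of the arrangement you inherit rather than the slab you add. To repair this you should either incorporate the paper's perturbation of $p$ to $p'$ before coning, or explicitly prove that the apex-on-hyperplanes configuration still yields a code stable under small Hausdorff perturbations of the $U_i$ --- the latter is not obviously true and is contrary to the paper's stated reason for the extra step.
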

We give a proof sketch and here and reserve the full proof for  Appendix \ref{app:hyperplanes}. Our construction of hyperplane realizations builds off of the construction for sphere realizations in the previous section. 
As before, we construct these realizations inductively. We start with a hyperplane realization of an inductively pierced code $\C$ on $n$ neurons and modify it construct a realization of  the code $\C' = \pierce{\C} {\lambda} {\sigma} {\tau}$ on $n+1$ neurons. As before, pick a point $$p\in \bigcap_{i\in \lambda} \partial U_i$$ such that a small neighborhood around $p$ does not cover up any atom in the realization of $\C$, and such that $p$ is interior to $\bigcap_{i\in \sigma} U_i\setminus \bigcup_{j\in \tau} U_j$. With the new condition that the $U_i's$ are half-spaces, it is no longer possible to choose $U_n$ to be a small open ball around $p$. To get around this, we lift the point $p$ into an extra dimension in the ambient space, and then use a hyperplane to slice off a small neighborhood of $p$ in the ambient space. More explicitly, we consider our realization as living in an $n$-dimensional subspace of $\R^{n+1}$, and construct a $n+1$ dimensional realization of $\C$ by extending the hyperplanes and the bounding set $X$ perpendicular to this subspace. We then choose a point $p'$ above $p$ and take the bounding convex set $X'$ of the new realization to be the cone $p'*X$. We then choose the hyperplane $H_{n+1}$ which slices a small neighborhood of $p'$ off  the top of the cone. Since this neighborhood projects down to a small neighborhood around $p$, this gives a realization of $\C'$. 
In order to produce a \emph{nondegenerate} hyperplane realization, we can modify this construction slightly by replacing $p$ with a nearby point $p'$ which is not contained on any of the hyperplanes.

We demonstrate this construction on codes $\{\varnothing, 1, 12, 2\}$ and $\{\varnothing, 1, 12\}$. See Figure \ref{fig:hyper_const} for an illustration.  We begin with the realization of the code $\{\varnothing, 1\}$. To construct a realization of $\{\varnothing, 1, 12, 2\}$, we pick $p$ on $H_1$, and pick $p'$ near $p$. We extend $H_1$ vertically upwards, and pick a point $\tilde p'$ above $p'$.  We choose the cone with point $\tilde p'$ as the bounding polytope of the new realization. We choose the half space $H_2^+$ cut off by a hyperplane parallel to the hyperplane containing the realization of $\{0, 1\}$, oriented up. To construct $\{\varnothing, 1, 12\}$, we proceed in essentially the same way, but choose $\tilde p'$ to be in the interior of the atom $A_1$. We choose our half space $H_2^+$ such that the neighborhood it cuts off is completely contained within $A_1$.

Note that Theorem \ref{thm:shellability_2} arises again as a corollary of Theorem \ref{thm:hyperplane}. 

\begin{cor} \label{cor:shelling}
If $\C$ is an inductively pierced code, $\Gamma(\C)$ is shellable. 
\end{cor}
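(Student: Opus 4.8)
The plan is to deduce this immediately from Theorem \ref{thm:hyperplane} together with the result of Itskov, Kunin, and Rosen cited just before Theorem \ref{thm:hyperplane}: if $\C$ is a nondegenerate hyperplane code, then $\Gamma(\C)$ is shellable \cite{itskov2018hyperplane}. So the single step is: let $\C$ be an inductively pierced code; by Theorem \ref{thm:hyperplane}, $\C$ is a nondegenerate hyperplane code; hence by \cite{itskov2018hyperplane}, $\Gamma(\C)$ is shellable. There is essentially nothing to prove beyond chaining these two facts.

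The only thing worth being careful about is making sure the hypotheses line up exactly. Theorem \ref{thm:hyperplane} produces a realization $(\mathcal H, X)$ with $X$ an $n$-simplex that is \emph{nondegenerate} as a realization in the sense defined in Section \ref{defs} (stability under small Hausdorff perturbations of the arrangement), which is precisely the notion of ``nondegenerate hyperplane code'' required by the hypothesis of the Itskov--Kunin--Rosen theorem. I would add one clause pointing this out so the reader does not have to re-check the definitions.

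I would also remark, as the paper already hints, that this corollary is genuinely redundant: Theorem \ref{thm:shellability_2} already establishes shellability of $\Gamma(\C)$ by an explicit combinatorial shelling order, independently of any geometric realization. The value of restating it here is to record that the shellability also falls out ``for free'' once one knows inductively pierced codes are hyperplane codes — it is a consistency check between the combinatorial and geometric pictures rather than a new result. So the corollary's proof is a one-liner, and the ``main obstacle'' (the construction of the nondegenerate hyperplane realization) has already been absorbed into the proof of Theorem \ref{thm:hyperplane} in Appendix \ref{app:hyperplanes}; nothing further is needed here.
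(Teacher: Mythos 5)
Your proposal is correct and matches the paper exactly: the paper presents this corollary as an immediate consequence of Theorem \ref{thm:hyperplane} combined with the Itskov--Kunin--Rosen result that nondegenerate hyperplane codes have shellable polar complexes, and offers no further proof. Your remarks about checking that the notion of nondegeneracy matches and about the redundancy with Theorem \ref{thm:shellability_2} are accurate and consistent with the paper's own framing.
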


This shelling order arises from a sweeping hyperplane argument, similar in spirit to \cite{chen2018neural}. In short, we sweep a hyperplane across $\R^n$ and record the order in which it encounters each atom. This induces an ordering on the codewords of $\C$, which in turn induces an ordering on the facets of $\Gamma(\C)$. 

We can therefore exploit this construction, together with the fact that the bounding convex set for our hyperplane realization of $\C$ is a simplex, to construct a shelling order of $\Gamma(\C)$ which respects the piercing order.  More precisely, we can find a shelling order such that facets whose codewords are added at the $i$-{th} piercing step come before facets whose codewords are added at the $(i+1)$-th step. 

To do this, we first notice that, since the bounding convex set of our arrangement is a simplex, its vertices are in general position. Thus, we can choose a sweep direction such that a sweeping hyperplane meets them in any order we choose. Choose a sweeping hyperplane which meets the vertices in the order they were added during the inductive construction. Now, assuming that we take our $H_i$ to pass sufficiently close to $p_i$, this sweeping hyperplane encounters codewords in the intended order. 

However, it is not clear that we can choose a sweep direction which produces the shelling order $<$, since it would need to encounter the set of codewords added at the same step in order of decreasing weight.  We leave this open:
\begin{question}
Given an inductively pierced code $\C$, can we always choose a hyperplane realization of $\C$ and a sweep direction $w$ such that the shelling order $<$ arises from sweeping a hyperplane across a hyperplane realization of $\C$?
\end{question}

\section{Toric ideals of inductively pierced codes\label{algebra}}

\subsection{Neural toric ideals}
In this section, we define the toric ideal of a neural code \suspect{and prove that the toric ideal of an inductively k-pierced code has a quadratic Gr\"obner basis. On the way to doing this, we prove a lemma that may be interesting in its own right, that the toric ideal of a subset of a neural code is contained within the toric ideal of the original code.}\revision{In this section, we define the toric ideal of a neural code and summarize existing theorems and conjectures about toric ideals of inductively pierced codes. We prove that the toric ideal of a subset of a neural code is contained within the toric ideal of the original code.} In this respect, the neural toric ideal is better behaved than the neural ideal. Our definition of the toric ideal of a neural code comes from \cite{gross2016neural}. For more background on toric ideals, see Chapter 4 of \cite{sturmfels1996grobner}.

\begin{definition}[Neural toric ideal]
Let $\C$ be a neural code on $n$ neurons and $\C^*=\C\setminus \varnothing$. Define the\emph{ codeword ring} of $\C$ to be $W_\C=k[\{y_c|c\in \C^*\}]$ and the \emph{neuron ring} to be $N_\C=k[x_1, \ldots, x_n]$. We will call the $y_c$ codeword variables and the $x_i$ neuron variables.  Let a homomorphism $\phi_\C:N_\C\to W_\C$ be given by 
\[y_c\mapsto \prod_{i\in c}x_i.\]
Then the \emph{toric ideal} $T_\C$ is the kernel of $\phi_\C$.  
\end{definition}

\begin{example} Let $\C = \{\varnothing,1, 2, 12\}$. Then $\phi_\code(y_{12}) = x_1x_2=\phi_\code(y_1y_2)$, so $y_1y_2-y_{12}\in T_\C$. In fact, $T_\C = \abk{y_1y_2-y_{12}}$. 
\end{example}

For those familiar with toric ideals, this relates to the ordinary definition of a toric ideal as follows: Construct a matrix  $M_\C$ whose columns are the nonempty codewords of $\C$, written in vector notation. The toric ideal $T_\C$ is then the toric ideal of the matrix $M_\C$, using the standard notion of the toric ideal of an integer matrix. Since neural codes are defined by the same combinatorial data as hypergraphs, our definition of the toric ideal of a neural code also lines up with the definition of the toric ideal of a hypergraph \cite{petrovic2017hypergraph, petrovic2014toric, gross2013combinatorial}. \subsection{Gr\"obner bases}

Gr\"obner bases depend on monomial orders. 
Define $x^a=x_1^{a_1}\cdots x_n^{a_n}$ for $a\in \mathbb N^n$.  Over $k[x_1, \ldots, x_n]$, we say that $\prec$ is a valid monomial order if the following hold: 
\begin{enumerate}
\item It is multiplicative: that is, $x^a\prec x^b$ implies $x^{a+c}\prec x^{b+c}$ for all $a, b, c\in \N^n$. 
\item The constant monomial is the smallest: that is, $1\prec x^a$ for all $a\in \N^n$. 
\end{enumerate}
For instance, the lexicographic order on $k[x_1, \ldots, x_n]$ gives that $p\prec q$ if the exponent of $x_1$ is greater in $p$ than in $q$. If this creates a tie, we break it by comparing the exponent of $x_2$, and so on and so forth. 
We denote the \emph{leading term} of the polynomial $f$ according to the monomial order $\prec$ by  $LT_\prec(f)$. The \emph{initial ideal} $in_\prec(I)$  of an ideal $I$ is the ideal generated by the leading terms of all polynomials in $I$. 

\begin{definition}[Gr\"obner basis]
A subset $\mathcal G$ is a Gr\"obner basis for $I$ if the leading term of each member of $I$ is divided by the leading term of a member of $\mathcal G$. That is, $\mathcal G$ is a Gr\"obner basis if 
\[in_\prec(I)=\abk{LT_\prec(g):g\in  \mathcal G}.\] 
\end{definition}

\subsection{Toric ideals of inductively pierced codes}
The authors of \cite{gross2016neural} give several criteria for identifying 0- and 1-inductively pierced codes using $T_\C$. They give a necessary and sufficient condition for identifying inductively 0-pierced codes: under some light assumptions on $\C$, $T_\C = \abk{0}$ if and only if $\C$ is inductively 1-pierced (Theorem 4.1). Next, they give a necessary condition for identifying inductively 1-pierced codes: if $\C$ is inductively $1$-pierced, then the toric ideal $T_\C$ is generated by quadratics or $T_\C = \abk{0}$ (Theorem 4.5). 

However, this condition is not sufficient. For instance, the neural code\\ $\C = \{\emptyset, 1, 2, 3, 12, 13, 23, 123\}$ is not inductively 1-pierced, but its toric ideal is generated by quadratics: 
\begin{align*}
T_C =  \abk{y_{110} - y_{100}y_{010}, y_{101} - y_{100}y_{001}, y_{011} - y_{010}y_{001}, y_{111} - y_{110}y_{001}}.
\end{align*}
Thus, the authors of \cite{gross2016neural} instead look for a necessary and sufficient condition for a code to be inductively 0- or 1-pierced in the Gr\"obner basis of the toric ideal with respect to a particular term order. For $n=3$, they find such a term order: assuming again some light assumptions on $\C$, they found that a code on 3 neurons is 1-inductively pierced if and only if the Gr\"obner basis of $T_\C$ with respect to the weighted graded reverse lexicographic order with the weight vector $w = (0,0,0,1,1,1,0)$ contains only binomials of degree 2 or less (Proposition 4.8).

Based on these results, the authors of \cite{gross2016neural} made the following conjecture:

\begin{conjecture*}[\cite{gross2016neural}, Conjecture 4.9]
For each $n$, there exists a term order such that a code is  0- or 1-inductively pierced if and only if the reduced Gr\"obner basis contains binomials of degree 2 or less. 
\end{conjecture*}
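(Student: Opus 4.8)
The plan is to build the required term order from the shelling order $<$ on codewords defined in Section~\ref{sec:polar}, refined by a graded reverse-lexicographic tie-break so that at $n=3$ it reproduces the weight vector $w=(0,0,0,1,1,1,0)$ of Proposition~4.8 of \cite{gross2016neural}; call the resulting monomial order $\prec$ on the codeword ring $W_\C$. Two implications must be established. \emph{Forward:} if $\C$ is $0$- or $1$-inductively pierced, then the reduced Gr\"obner basis of $T_\C$ with respect to $\prec$ consists of binomials of degree at most $2$. \emph{Converse:} if that reduced Gr\"obner basis is quadratic, then $\C$ is $0$- or $1$-inductively pierced. The forward implication is exactly the $k=1$ instance of the (now conjectural) statement \ref{thm:grob}, and I expect the restriction $|\lambda|\le 1$ to be what makes it provable.

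For the forward direction I would induct on $n$, using the lemma of Section~\ref{algebra} that $T_\C\subseteq T_{\C'}$ whenever $\C\subseteq\C'$, so that a Gr\"obner basis of $T_\C$ sits inside $T_{\C'}$. Write $\C'=\pierce{\C}{\lambda}{\sigma}{\tau}$ with $|\lambda|\le 1$ and let $n$ be the new neuron. If $\lambda=\varnothing$, the only new codeword is $\sigma\cup\{n\}$, and since $y_{\sigma\cup\{n\}}$ is the unique codeword variable whose image under $\phi_{\C'}$ is divisible by $x_n$, a short argument grading by powers of $x_n$ gives $T_{\C'}=T_\C\cdot W_{\C'}$, so the inductive Gr\"obner basis works verbatim. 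If $\lambda=\{a\}$, the new codewords are $\sigma\cup\{n\}$ and $\sigma\cup\{a,n\}$ (and $\sigma,\sigma\cup\{a\}\in\C$ by pierceability), and the genuinely new relation is the degree-$2$ binomial
\[
g \;=\; y_{\sigma\cup\{a\}}\,y_{\sigma\cup\{n\}} \;-\; y_{\sigma}\,y_{\sigma\cup\{a,n\}}
\]
(read as $y_{\{a\}}y_{\{n\}}-y_{\{a,n\}}$ when $\sigma=\varnothing$). I would show that adjoining $g$ to the quadratic Gr\"obner basis $\mathcal G$ of $T_\C$ supplied by induction yields a Gr\"obner basis of $T_{\C'}$ by checking Buchberger's criterion: every $S$-polynomial $S(g,h)$, $h\in\mathcal G$, has support inside the old support together with $\{\sigma,\sigma\cup\{a\},\sigma\cup\{n\},\sigma\cup\{a,n\}\}$, and because $\prec$ always takes as leading term the monomial whose codewords have the larger value of $\max$ --- the ones involving the newer neuron --- the pair $y_{\sigma\cup\{n\}},y_{\sigma\cup\{a,n\}}$ acts as a handle that telescopes, forcing $S(g,h)$ to reduce to $0$. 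Polytopally this is the statement that $\mathrm{conv}(M_{\C'})$ is a pyramid-type extension of $\mathrm{conv}(M_\C)$ to which a flag regular unimodular triangulation extends, quadraticity of the reduced Gr\"obner basis being the usual consequence of having a flag regular unimodular triangulation.

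For the converse I would work by contrapositive: assuming $\C$ is not $0$- or $1$-inductively pierced, exhibit an element of the reduced Gr\"obner basis of degree $\ge 3$. The natural route is to classify the \emph{minimal} codes that fail to be $0$- or $1$-inductively pierced --- minimal under deletion of neurons, or under the code morphisms of \cite{jeffs2018morphisms} --- show each such minimal obstruction forces a cubic binomial whose $\prec$-leading term is divisible by no quadratic leading term, and use the subset lemma to push this up to any $\C$ containing such an obstruction; the code $\{\varnothing,1,2,3,12,13,23,123\}$ of the excerpt, which is quadratically \emph{generated} but has a term-order-sensitive Gr\"obner degree, is the prototype of the delicate case the classification must handle. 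Alternatively one argues that a quadratic $in_\prec(T_\C)$ forces $\mathrm{conv}(M_\C)$ to carry a flag regular unimodular triangulation of a very restricted combinatorial type, from which the piercing history can be read off.

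The hard part is the converse together with the requirement that a \emph{single} term order work \emph{uniformly in $n$}. Making $\prec$ well defined is already nontrivial: $<$ is read off a piercing history, a code can have several inequivalent histories, and it is not obvious that $in_\prec(T_\C)$ is history-independent, so one must prove that independence or else replace $<$ by a fixed graded-rev-lex order and re-verify the $n=3$ base case against \cite{gross2016neural}. More essentially, the conjecture is two-sided: the order must be lax enough to certify \emph{every} $0$- or $1$-pierced code with a quadratic basis and strict enough to expose \emph{every} code outside the class with a higher-degree Gr\"obner element, and it is this simultaneous tightness, rather than either implication alone, that I expect to be the genuine obstacle --- and the reason the statement remains a conjecture.
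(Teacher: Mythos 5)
The paper does not prove this statement. It is Conjecture 4.9 of Gross--Obatake--Youngs, and this paper states it \emph{as a conjecture}, explicitly declining to prove it. Indeed, the abstract of the present paper says the author previously circulated a flawed proof of the related claim (that the toric ideal of an inductively pierced code has a quadratic Gr\"obner basis under the shelling-order term order) and has since \emph{withdrawn} that proof, leaving both that claim and Conjecture~4.9 open. So there is no paper proof to compare you against: your task was to match a proof that does not exist.

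As for your sketch on its own terms: you recognize at the end that the hard parts are exactly the gaps, which is the right instinct, but it should be said plainly that neither direction is a proof. For the forward direction, your Buchberger step is the crux and it is left entirely to intuition --- the assertion that the new binomial $g = y_{\sigma\cup\{a\}}y_{\sigma\cup\{n\}} - y_{\sigma}y_{\sigma\cup\{a,n\}}$ together with the inductive basis passes Buchberger because ``the pair $y_{\sigma\cup\{n\}}, y_{\sigma\cup\{a,n\}}$ acts as a handle that telescopes'' is not an argument, it is a hope. This is precisely the place where the author's earlier proof attempt broke: adjoining $g$ need not give a Gr\"obner basis of $T_{\C'}$, because the inclusion $T_\C \subseteq T_{\C'}$ (Lemma~\ref{lemma:nested}) does not tell you that $T_{\C'}$ is generated over $T_\C$ by $g$, nor that $S$-polynomials against the old basis reduce; one must actually exhibit the reductions or produce the flag regular unimodular triangulation you gesture at, and no one has. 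The $\lambda=\varnothing$ case needs more care too: your grading argument shows any binomial in $T_{\C'}$ has balanced $y_{\sigma\cup\{n\}}$-degree, but cancelling those variables lands you in $T_\C$ only after checking that the residual binomial is still in the kernel over the smaller variable set, and there is also the issue that a new neuron variable $y_{\sigma\cup\{n\}}$ can duplicate the monomial image of an existing variable $y_\sigma$ (namely when $n$ is ``0-pierced inside'' $\sigma$ and no $x_n$ appears --- this does not happen here because $x_n$ appears, but the general shape of the argument must rule out such collisions). The converse direction is not attempted beyond naming a strategy. Finally, you correctly flag that $\prec$ depends on a choice of piercing history and that well-definedness of $in_\prec(T_\C)$ across histories is unproven; this, plus the uniformity-in-$n$ requirement, is the substantive obstacle, and your honest closing sentence is the most accurate line in the proposal.
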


\suspect{We view our Theorem \ref{thm:grob} as a stronger form of Theorem 4.5 and a partial answer to Conjecture 4.9. Namely, we show that if a code is inductively 1-pierced, there is some term order such that the reduced Gr\"obner basis contains binomials of degree 2 or less. However, while our term order does not depend on the particular inductively pierced code, it does depend on the neurons in the inductively pierced code being labeled such that the $i^{th}$ neuron is added as a piercing at the $i^{th}$ step. Further, in this term order, the toric ideals of many codes which are not 0- or 1- inductively pierced have quadratic Gr\"obner bases. In particular, any $k$-inductively pierced code has a quadratic Gr\"obner basis in this term order. }

\revision{}

\suspect{In order to prove this theorem, we first show that toric ideals behave well under inductive constructions:}

\revision{ Toric ideals behave well under inductive constructions on codes:}

\begin{lemma}
 If $\C \subset \C'$, then $T_\C\subseteq T_{\C'}$.  \label{lemma:nested}
\end{lemma}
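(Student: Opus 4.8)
The plan is to show that the inclusion $\C\subset\C'$ induces a surjection of codeword rings that is compatible with the two toric maps, and then to deduce the inclusion of kernels. Write $\C^*=\C\setminus\varnothing$ and $(\C')^*=\C'\setminus\varnothing$, so that $\C^*\subseteq(\C')^*$. There is an obvious inclusion of polynomial rings $W_\C=k[\{y_c\mid c\in\C^*\}]\hookrightarrow W_{\C'}=k[\{y_c\mid c\in(\C')^*\}]$, and both codes live over the same neuron ring $N=k[x_1,\dots,x_n]$ (if $\C$ and $\C'$ are on different numbers of neurons, one first pads $\C$ with the ambient variables; in the application $\C$ and $\C'$ are on $n-1$ and $n$ neurons, and one includes $N_\C\hookrightarrow N_{\C'}$). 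First I would record the key commuting diagram: the maps $\phi_\C\colon N\to W_\C$ and $\phi_{\C'}\colon N\to W_{\C'}$, both sending $y_c\mapsto\prod_{i\in c}x_i$, satisfy $\iota\circ\phi_\C=\phi_{\C'}$ where $\iota\colon W_\C\hookrightarrow W_{\C'}$ is the inclusion — this is immediate because the two maps are given by the same monomial formula on the generators $y_c$, $c\in\C^*$.

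Wait — the variables here are the $y_c$, which live in the \emph{source} of $\phi_\C$ in the sense that $\phi_\C$ is a map \emph{out of} $N_\C$; let me restate carefully. The map is $\phi_\C\colon W_\C\to N_\C$... actually the definition in the excerpt reads $\phi_\C\colon N_\C\to W_\C$ with $y_c\mapsto\prod_{i\in c}x_i$, which is a slight notational quirk, but the substantive content is that $T_\C=\ker\phi_\C$ is an ideal of the codeword ring $W_\C$ (the $y_c$ are the variables being related). So I would take the diagram to be: $\psi_\C\colon W_\C\to N_\C$, $y_c\mapsto\prod_{i\in c}x_i$, with $T_\C=\ker\psi_\C\subseteq W_\C$, and similarly $\psi_{\C'}\colon W_{\C'}\to N_{\C'}$. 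The inclusion $\iota\colon W_\C\hookrightarrow W_{\C'}$ and the inclusion $j\colon N_\C\hookrightarrow N_{\C'}$ fit into a commutative square $j\circ\psi_\C=\psi_{\C'}\circ\iota$, again because both composites send $y_c\mapsto\prod_{i\in c}x_i$ for $c\in\C^*$.

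Given this square, the inclusion $T_\C\subseteq T_{\C'}$ is a one-line diagram chase: if $f\in W_\C$ with $\psi_\C(f)=0$, then $\psi_{\C'}(\iota(f))=j(\psi_\C(f))=j(0)=0$, so $\iota(f)\in\ker\psi_{\C'}=T_{\C'}$; identifying $W_\C$ with its image under $\iota$, we get $T_\C\subseteq T_{\C'}$. I expect essentially no obstacle in the proof itself; the only point requiring a little care is the bookkeeping of ambient neuron sets — namely checking that $j\colon N_\C\hookrightarrow N_{\C'}$ is injective (it is, being an inclusion of polynomial rings) so that $j(\psi_\C(f))=0$ genuinely forces $\psi_\C(f)=0$ is \emph{not} needed, but one does want $j$ to be a ring homomorphism commuting with the $\phi$'s, which holds since the extra neuron variables simply do not appear in any $\prod_{i\in c}x_i$ with $c\subseteq[n-1]$. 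A remark worth including: this is why the toric ideal is better behaved than the neural ideal — the neural ideal of a subcode need not be contained in that of the supercode, since adding codewords removes generators (non-codewords) from $J_\C$, whereas adding codewords only adds variables to the codeword ring and leaves all old relations intact.
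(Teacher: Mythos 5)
Your proof is correct and follows the same route as the paper: set up the commuting square between the inclusion of codeword rings and the inclusion of neuron rings, verify commutativity on the generators $y_c$, and chase a kernel element through the square. You also correctly untangle a notational slip in the paper's statement (the paper's definition has $\phi_\C\colon N_\C\to W_\C$ sending $y_c\mapsto\prod_{i\in c}x_i$, which doesn't type-check as written, and the proof swaps the roles of $W$ and $N$ midstream); your restatement with $\psi_\C\colon W_\C\to N_\C$ and $T_\C=\ker\psi_\C\subseteq W_\C$ is the intended reading.
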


\begin{proof}
Let $\C$ be a neural code on $[n]$, $\C'$ be a neural code on $[n']$, $n\leq n'$, $\C\subseteq \C'$. Let $W_\C = k[x_1, \ldots, x_n]$, $W_{\C'} = k[x_1, \ldots, x_{n'}]$, $N_\C = k[y_c |c\in \C]$ $N_{\C'} = k[y_c |c\in \C']$. 

Define the maps  $w: W_{\C}\to W_{\C'}$ and $n:N_{\C}\to N_{\C'}$ to be inclusion maps. 
We show that the following diagram commutes:

\begin{center}
\begin{tikzcd}
W_\C \arrow[r, "w"] \arrow[d, "\phi_\C"]
& W_{\C'} \arrow[d, "\phi_{\C'}"] \\
N_\C \arrow[r, "n" ]
& N_{\C'}
\end{tikzcd}
\end{center}

Recall that $\phi_\C, \phi_{\C'}$ are defined by \[\phi_{\C}(y_c)=\prod_{i\in c} x_i\, , \qquad \phi_{\C'}(y_c')=\prod_{i\in c'}x_i.\] Thus, 
\[n\circ \phi_\code(y_c)=n\left(\prod_{i\in c} x_i\right)=\prod_{i\in c} x_i\]
and 
\[\phi_{\C'}\circ w (y_{c})=\phi_{\C'}\circ y_c =\prod_{i\in c'} x_i=\prod_{i\in c} x_i,\]
since $\phi_\C$ and $\phi_{\C'}$ give the same map when restricted to $\C$. Thus, the diagram commutes and $T_\C = w(T_\C)\subset T_{\C'}$.  

\end{proof}

\begin{cor} If $\C'$ is a piercing of $\C$, $T_\C \subseteq T_{\C'}$. 
\end{cor}

In Section \ref{sec:polar}, we defined an order $<$ on the codewords of a neural code $\C$, and showed that it defined a shelling order on $\Gamma(\C)$. We now reuse this order to define a monomial order on  $k[y_c|c\in \C^*]$. We let $\prec$ be the lexicographic term order for $k[y_c|c\in \C^*]$ with the $y_c$ ordered by $y_c\prec y_d$ if $c<d$.

For instance, $y_{12} \prec y_1y_2$, since $ \{1,2\}< \{2\} $. 

Note that our definition of $<$ is completely independent of the particular neural code we are working with. In particular, $<$ gives a total ordering on the set of finite subsets of the $\mathbb Z_{> 0}$. Therefore $\prec$ gives a monomial order on $k[y_c |c\in 2^{[n]}\setminus \{\varnothing\}]$ for all $n$. 
 Therefore, if $p, q\subset k[y_c|c\in \C^*]\subset k[y_c|c\in \C'^*] $, $p\prec q$ when we are working in $k[y_c|c\in \C^*]$ if and only if  $p\prec q$ when we are working in $k[y_c|c\in \C'^*]$. Along with Lemma \ref{lemma:nested}, this allows us to use induction while working with toric ideals of inductively pierced codes.

\begin{conjecture}
\revision{If a neural code is labeled such that the $i^{th}$ neuron is added as a piercing at the $i^{th}$ step, then its toric ideal has a quadratic Gr\"obner basis with respect to the term order $\prec$.}
\end{conjecture}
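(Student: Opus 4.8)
The plan is to argue by induction on the number of neurons $n$. The base case $n=1$ is immediate, since then $T_\C=\abk{0}$. For the inductive step we are handed an inductively pierced code $\C$ on $n-1$ neurons whose toric ideal has a quadratic Gr\"obner basis $\mathcal G$ with respect to $\prec$, together with a piercing $\C'=\pierce{\C}{\lambda}{\sigma}{\tau}$, and we must produce a quadratic Gr\"obner basis for $T_{\C'}$. Write $z_\nu:=y_{\sigma\cup\nu\cup\{n\}}$ for $\nu\subseteq\lambda$; these are exactly the codeword variables of $\C'$ that do not belong to $\C$. Since the neuron variable $x_n$ divides $\phi_{\C'}(y_c)$ precisely when $c$ is one of the new codewords, every binomial of $T_{\C'}$ is homogeneous in the number of $z$-variables it involves, so a quadratic binomial has $z$-degree $0$, $1$, or $2$. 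Lemma \ref{lemma:nested}, together with the compatibility of $\prec$ with passage to subcodes noted above, ensures the inductive hypothesis embeds cleanly: $\mathcal G$ is a set of ($z$-degree $0$) binomials of $T_{\C'}$ whose leading terms agree with those computed in $T_\C$.

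The core of the plan is to write down an explicit candidate Gr\"obner basis. I would take: (i) the elements of $\mathcal G$; (ii) the \emph{Hibi relations} $z_{\nu_1}z_{\nu_2}-z_{\nu_1\cup\nu_2}\,z_{\nu_1\cap\nu_2}$ for incomparable $\nu_1,\nu_2\subseteq\lambda$, which are exactly the defining quadrics of the toric ideal of the Boolean lattice $2^{\lambda}$ under $\nu\mapsto x^{\nu}$ — a distributive lattice, whose Hibi ring carries the classical quadratic Gr\"obner basis given by these relations; and (iii) \emph{bridging relations} linking the $z_\nu$ to the old variables, namely the quadric $z_\nu\,y_\sigma-z_\varnothing\,y_{\sigma\cup\nu}$ (with appropriate degenerate forms when $\sigma=\varnothing$), which is legitimate by pierceability, together with the lifts $z_\nu\,y_c-z_{\nu'}\,y_{c'}$ of the ``boundary'' generators of $\mathcal G$, i.e.\ those of the form $y_{\sigma\cup\nu}\,y_c-y_{\sigma\cup\nu'}\,y_{c'}$. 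Morally this candidate set reflects that $T_{\C'}$ should be a toric fiber product (or a closely related gluing) of $T_\C$ with the Hibi ideal of $2^{\lambda}$ along the sub-configuration $\{\sigma\cup\nu:\nu\subseteq\lambda\}\subseteq\C$, with families (i)--(iii) playing the roles of the ``lifts of one side,'' ``lifts of the other side,'' and the ``fiber quadrics'' in Sullivant's analysis of Gr\"obner bases of toric fiber products; recognizing the situation as a genuine instance of that machinery would be an attractive shortcut, though I would not assume it works verbatim.

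Two things then remain. First, that the candidate family generates $T_{\C'}$: here I would use $z$-homogeneity and induct on $z$-degree and total degree, peeling off a $z$-variable via a bridging relation to drop to a binomial of smaller $z$-degree supported on the variables of $\C$, or to a pure Hibi relation among the $z_\nu$. Already this step is delicate, because it requires knowing exactly which generators of $\mathcal G$ admit lifts and that the lifted relations suffice. Second — and this is where I expect the \textbf{real obstacle} — one must show the family is a Gr\"obner basis, via Buchberger's criterion (checking that the $S$-polynomials reduce to zero, in particular those between a lifted generator of $\mathcal G$ and a Hibi relation, and those between two bridging relations), or via the corresponding Gr\"obner-basis theorem for the fiber-product structure. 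The obstruction is not cosmetic: when $|\lambda|=k\geq 2$ the sub-configuration $\{\sigma\cup\nu:\nu\subseteq\lambda\}$ already carries nontrivial relations (the Hibi ideal of $2^\lambda$ is nonzero), so the gluing has positive codimension, and in that regime $S$-polynomial reductions can produce leading terms divisible by no quadratic leading term on the list, forcing higher-degree Gr\"obner elements. For $k\leq 1$ this configuration has at most two elements, the Hibi ideal of $2^\lambda$ is trivial, the gluing is codimension $0$, and the argument closes — consistent with the quadratic-basis results for inductively $1$-pierced codes in \cite{gross2016neural}. A reasonable fallback would be to first settle $k=2$, where the Hibi ideal of $2^{\{1,2\}}$ is principal and the finitely many fiber quadrics can be handled by hand, or to aim for the weaker statement that the Gr\"obner basis has degree bounded purely in terms of $k$.
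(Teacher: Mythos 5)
You should know first that the statement you were asked to prove is stated in the paper as a \emph{conjecture}, not a theorem, and the paper offers no proof of it. The abstract explicitly says that an earlier version of the preprint gave a flawed proof of this claim, and that the authors have downgraded it to a conjecture supported only by computational evidence. So there is no ``paper's own proof'' to compare your attempt against.

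Given that, your assessment is well calibrated, and the place you flag as the real obstacle is exactly where one should expect trouble. Your candidate generating set --- the inductively-given basis $\mathcal G$ for $T_\C$ (which embeds into $T_{\C'}$ via Lemma \ref{lemma:nested}), the Hibi quadrics among the new variables $z_\nu$, and the bridging quadrics tying $z_\nu$ to the old $y$-variables --- is the natural thing to write down, and the toric-fiber-product framing is a sensible lens. But, as you note, once $|\lambda|\geq 2$ the gluing configuration $\{\sigma\cup\nu : \nu\subseteq\lambda\}$ carries nontrivial relations, and Sullivant's Gr\"obner theorem for toric fiber products in that codimension requires compatibility hypotheses on the term orders that you would have to verify for $\prec$, not merely hope for. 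Concretely, an $S$-polynomial between a Hibi relation on the $z$'s and a lifted element of $\mathcal G$ can, after reduction by everything on your list, leave a binomial whose leading monomial is not divisible by any quadratic leading term you have; your argument gives no mechanism forcing that residual to vanish. This is precisely the kind of failure that would have sunk the original attempted proof. Your observation that $k\leq 1$ goes through (matching the quadratic results in \cite{gross2016neural}) and that $k=2$ should be attacked by hand is the right next step; but as written, this is a plan with a known open gap, which matches the paper's own status for this claim.
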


\revision{Code used to generate computational evidence of this conjecture can be found at \url{https://github.com/lienkaemper/Neural-Toric-Ideals/blob/master/Downloads/neural_toric_examples.ipynb}}

\suspect{Now, note that the existence of a quadratic Gr\"obner basis for $T_\C$ does \emph{not} follow from the other combinatorial properties of inductively pierced codes discussed in this paper. }

\revision{The existence of a quadratic Gr\"obner basis for $T_\C$ does \emph{not} follow from the other combinatorial properties of inductively pierced codes discussed in this paper.  }For instance, it is easy to see that the code 
$$\C = \{1, 2, 123, 3, \varnothing\}$$
is intersection complete and that $\Delta(C)$ is the solid 2-simplex (otherwise known as a triangle), which is collapsible. However, the toric ideal $T_\C$ is given by 
$$T_\C = \abk{y_{123}-y_1y_2y_3}. $$
This does not have a quadratic Gr\"obner basis. 

\section{Discussion}
We showed that an inductively $k$ pierced code on $n$ neurons has a realization with spheres in $\R^{k+1}$ and with hyperplanes in $\R^n$. In general, this means our hyperplane embedding dimension can be much higher than our sphere embedding dimension. Can we bring the hyperplane embedding dimension down? By how much?

\begin{question} What is the minimal hyperplane embedding dimension of an inductively $k$-pierced code on $n$ neurons?
\end{question}

We constructed hyperplane and sphere realizations of inductively pierced codes using similar procedures. Thus, it is natural to ask about the relationship between these realizations. Is there some way to see one realization as the image of the other one, or vice versa? 
\begin{question}
Is there a continuous map from $X\subset \R^n$ to $\R^k$ which takes a hyperplane realization of an inductively pierced code to the sphere realization to the same code?
\end{question}
An answer to this question might give some insight into the relationship between hyperplane codes and sphere codes more generally.

We introduced an order $<$ on the codewords of a neural code and found that if $\C$ was an inductively pierced code, then $<$ gave a shelling order on $\Gamma(\C)$ and induced a lexicographic monomial order in which $T_\C$ has a quadratic Gr\"obner basis \revision{in all examples we were able to computationally check}.  Thus, it is natural to ask whether there is a deeper relationship between these two results. To this end, we observe that the toric ideal of a neural code has a very nice interpretation in terms of $\Gamma(\C)$.  

Let $\C$ be a neural code, $T_\C$ be its toric ideal. Let $p-q\in T_\C$. Then the monomials $p$ and $q$ each correspond to a subset of the facets of $\Gamma(\C)$. Then $p-q\in T_\C$ if and only if each on-vertex of $\Gamma(\C)$ appears in the same number of facets indexed by $p$ and facets indexed by $q$. We can strengthen this result slightly if we require the toric ideal to be homogeneous. We can force this by modifying the code by including a dummy neuron, which we label neuron 0,  in each codeword.  For instance, we replace the code $\{\varnothing, 1, 2, 12\}$ with the code $\{0, 01, 02, 012\}$. The inhomogeneous polynomial $y_1y_2-y_{12}\in T_\C$ becomes the homogeneous polynomial $y_{01}y_{02}-y_{012}y_0$. In this case, we have $p-q\in T_\C$ if and only if each vertex (on or off) of $\Gamma(\C)$ appears in the same number of facets indexed by $p$ and indexed by $q$. Because we can define the polar complex of an arbitrary hypergraph, these connections between the toric ideal of a hypergraph and the combinatorial topology of its polar complex may be more broadly interesting.

Thus, it is natural to ask whether, if $\Gamma(\C)$ is shellable,  $T_\C$ has a quadratic Gr\"obner basis induced by the shelling order? It turns out that the answer to this question is no. In a search of random nondegenerate hyperplane codes, we found the code  \begin{align*}\C = \{134, 13, 3, \varnothing, 1, 12, 34, 234, 1234, 123, 4\}.\end{align*} The polar complex $\Gamma(\C)$ is shellable because $\C$ is a nondegenerate hyperplane code. The codewords are written in a shelling order of $\Gamma(\C)$. However, the Gr\"obner basis for $T_\C$ in the lexicographic monomial order corresponding to this ordering on the codewords is 
\begin{align*}y_{00011} y_{11101} - y_{00111} y_{11001}, y_{00011} y_{00101} - y_{00111} y_{00001},\\ y_{00011} y_{10101} - y_{00001} y_{10111}, y_{11101} y_{00111} - y_{01111} y_{10101}, \\y_{11101} y_{10001} - y_{11001} y_{10101}, y_{11101} y_{00001} - y_{11001} y_{00101}, \\y_{11101} y_{10111} - y_{11111} y_{10101}, y_{11111} y_{00111} - y_{01111} y_{10111},\\ y_{11111} y_{10001} - y_{11001} y_{10111}, y_{11111} y_{00001} - y_{00111} y_{11001}, \\y_{11111} y_{00101} - y_{01111} y_{10101}, y_{01111} y_{10001} - y_{00111} y_{11001},\\ y_{01111} y_{00001} y_{10101} - y_{00111} y_{11001} y_{00101}, \\ y_{01111} y_{00001} y_{10111} - y_{00111}^{2} y_{11001}, y_{00111} y_{10001} - y_{00001} y_{10111},\\ y_{00111} y_{10101} - y_{00101} y_{10111}, y_{10001} y_{00101} - y_{00001} y_{10101}
\end{align*}
which contains the cubic terms
\begin{align*} y_{01111} y_{00001} y_{10101} - y_{00111} y_{11001} y_{00101}\end{align*}
and
\begin{align*}
y_{01111} y_{00001} y_{10111} - y_{00111}^{2} y_{11001}.
\end{align*}
However, the toric ideal of this code, and of all other random hyperplane codes we have checked, does have a quadratic Gr\"obner basis in another term order.  (For more examples and computations, see \url{https://github.com/lienkaemper/Neural-Toric-Ideals}. ) In light of this, we ask 
\begin{question}
If $\C$ is a nondegenerate hyperplane code, does $T_\C$ have a quadratic Gr\"obner basis? Does it have a quadratic generating set? 
\end{question}

More generally, how can we describe the class of neural codes whose toric ideals have quadratic generating sets? Are there other nice geometric or combinatorial properties they must all satisfy? 

In \cite{jeffs2018morphisms}, Jeffs turns the set of neural codes into a a category by defining morphisms between codes. Thus, we can ask how inductively pierced codes behave under morphisms. 
\begin{question}
Which codes are images of inductively pierced codes? Which codes are preimages of inductively pierced codes?
\end{question}

%
%
%
%

\section*{Acknowledgements}
This project began as an undergraduate thesis supervised by Mohamed Omar, who I thank for great advice both during my senior year and after graduation. I thank Carina Curto for encouraging me to keep working on this project and providing guidance while I wrote this paper. I thank Alex Kunin for many helpful discussions and for providing comments on early drafts. I thank Micah Pedrick for proofreading a few drafts. I thank Whitney Liske for pointing out the error in the first version during the poster session at the Graduate Workshop in Commutative Algebra for Women \& Mathematicians of Other Minority Genders in April 2019. I was partially funded by the grant NSF DMS-1516881.

\bibliography{mybib}

\begin{thebibliography}{10}

\bibitem{bjorner1997shellable}
Anders Bj{\"o}rner and Michelle Wachs.
\newblock Shellable nonpure complexes and posets. ii.
\newblock {\em Transactions of the American Mathematical Society},
  349(10):3945--3975, 1997.

\bibitem{chen2018neural}
Aaron Chen, Florian Frick, and Anne Shiu.
\newblock Neural codes, decidability, and a new local obstruction to convexity.
\newblock {\em arXiv preprint arXiv:1803.11516}, 2018.

\bibitem{cruz2016open}
Joshua Cruz, Chad Giusti, Vladimir Itskov, and Bill Kronholm.
\newblock On open and closed convex codes.
\newblock {\em arXiv preprint arXiv:1609.03502}, 2016.

\bibitem{curto2017makes}
Carina Curto, Elizabeth Gross, Jack Jeffries, Katherine Morrison, Mohamed Omar,
  Zvi Rosen, Anne Shiu, and Nora Youngs.
\newblock What makes a neural code convex?
\newblock {\em SIAM Journal on Applied Algebra and Geometry}, 1(1):222--238,
  2017.

\bibitem{curto2018algebraic}
Carina Curto, Elizabeth Gross, Jack Jeffries, Katherine Morrison, Zvi Rosen,
  Anne Shiu, and Nora Youngs.
\newblock Algebraic signatures of convex and non-convex codes.
\newblock {\em arXiv preprint arXiv:1807.02741}, 2018.

\bibitem{curto2013neural}
Carina Curto, Vladimir Itskov, Alan Veliz-Cuba, and Nora Youngs.
\newblock The neural ring: an algebraic tool for analyzing the intrinsic
  structure of neural codes.
\newblock {\em Bulletin of mathematical biology}, 75(9):1571--1611, 2013.

\bibitem{gross2016neural}
Elizabeth Gross, Nida~Kazi Obatake, and Nora Youngs.
\newblock Neural ideals and stimulus space visualization.
\newblock {\em arXiv preprint arXiv:1607.00697}, 2016.

\bibitem{gross2013combinatorial}
Elizabeth Gross and Sonja Petrovi{\'c}.
\newblock Combinatorial degree bound for toric ideals of hypergraphs.
\newblock {\em International Journal of Algebra and Computation},
  23(06):1503--1520, 2013.

\bibitem{itskov2018hyperplane}
Vladimir Itskov, Alex Kunin, and Zvi Rosen.
\newblock Hyperplane neural codes and the polar complex.
\newblock {\em arXiv preprint arXiv:1801.02304}, 2018.

\bibitem{jeffs2018morphisms}
R~Amzi Jeffs.
\newblock Morphisms of neural codes.
\newblock {\em arXiv preprint arXiv:1806.02014}, 2018.

\bibitem{kozlov2007combinatorial}
Dimitry Kozlov.
\newblock {\em Combinatorial algebraic topology}, volume~21.
\newblock Springer Science \& Business Media, 2007.

\bibitem{lienkaemper2017obstructions}
Caitlin Lienkaemper, Anne Shiu, and Zev Woodstock.
\newblock Obstructions to convexity in neural codes.
\newblock {\em Advances in Applied Mathematics}, 85:31--59, 2017.

\bibitem{moradi2013vertex}
Somayeh Moradi and Fahimeh Khosh-Ahang.
\newblock On vertex decomposable simplicial complexes and their alexander
  duals.
\newblock {\em arXiv preprint arXiv:1302.5947}, 2013.

\bibitem{petrovic2014toric}
Sonja Petrovi{\'c} and Despina Stasi.
\newblock Toric algebra of hypergraphs.
\newblock {\em Journal of Algebraic Combinatorics}, 39(1):187--208, 2014.

\bibitem{petrovic2017hypergraph}
Sonja Petrovi{\'c}, Apostolos Thoma, and Marius Vladoiu.
\newblock Hypergraph encodings of arbitrary toric ideals.
\newblock {\em arXiv preprint arXiv:1711.04354}, 2017.

\bibitem{stapleton2011drawing}
Gem Stapleton, Leishi Zhang, John Howse, and Peter Rodgers.
\newblock Drawing euler diagrams with circles: The theory of piercings.
\newblock {\em IEEE transactions on visualization and computer graphics},
  17(7):1020--1032, 2011.

\bibitem{sturmfels1996grobner}
Bernd Sturmfels.
\newblock {\em Gr{\"o}bner bases and convex polytopes}, volume~8.
\newblock American Mathematical Soc., 1996.

\end{thebibliography}
\bibliographystyle{plain}

\appendix

\section{Constructing the realization with balls \label{app:balls}}

In order to construct this realization, we make use for the following lemma:

\begin{lemma}\label{lem:circles}
For $k\geq 1$, the intersection of $m$ distinct $k$-spheres, such that the $m-$way intersection between the balls bounded is nonempty and no ball contains another,  is $k-m+1$ sphere. 
\end{lemma}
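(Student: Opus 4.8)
The plan is to work in $\R^{k+1}$, where each $k$-sphere $S_i$ is the zero set of a quadratic $q_i(x) = \|x - c_i\|^2 - r_i^2$ (center $c_i$, radius $r_i$) and $B_i = \{q_i < 0\}$ is the open ball it bounds. The starting observation is that for $i \ne j$ the difference $q_i - q_j$ is an \emph{affine} function with gradient $2(c_j - c_i)$, whose zero set is the radical hyperplane $H_{ij}$. Hence
\[
\bigcap_{i=1}^{m} S_i \;=\; \{q_1 = 0\}\cap\{q_1 = q_2\}\cap\cdots\cap\{q_1 = q_m\} \;=\; S_1 \cap A, \qquad A := \bigcap_{j=2}^{m} H_{1j},
\]
so the whole problem reduces to understanding how a round $k$-sphere meets an affine subspace. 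First I would check that $A$ has the expected codimension $m-1$: the normal vectors $c_2 - c_1, \dots, c_m - c_1$ are linearly independent for the configurations at hand (for $m=2$ this is forced, since $c_1 = c_2$ together with $S_1 \ne S_2$ would put one ball inside the other; in general this is the relevant general-position hypothesis, which in the inductive construction of Appendix \ref{app:balls} is automatic because the tangent hyperplanes to the $\partial U_i$ at the piercing point are chosen to be in general position). Granting this, $\dim A = (k+1)-(m-1) = k-m+2$.

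Next I would compute $S_1 \cap A$ explicitly. Let $a$ be the orthogonal projection of $c_1$ onto $A$, so $c_1 - a \perp (A - a)$; then Pythagoras gives, for every $x \in A$,
\[
q_1(x) \;=\; \|x - c_1\|^2 - r_1^2 \;=\; \|x - a\|^2 + \|a - c_1\|^2 - r_1^2 \;=\; \|x - a\|^2 - \rho^2,
\]
where $\rho^2 := r_1^2 - \mathrm{dist}(c_1, A)^2$. Therefore $\bigcap_i S_i = \{x \in A : \|x - a\| = \rho\}$, which is precisely a round $(\dim A - 1) = (k-m+1)$-sphere, \emph{provided} $\rho^2 > 0$ (if $\rho^2 = 0$ it collapses to the point $a$, and if $\rho^2 < 0$ it is empty, consistent with the convention $S^{-1} = \varnothing$ once $m$ exceeds $k+1$).

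So the real content, and the step I expect to be the main obstacle, is showing $\rho^2 > 0$, i.e.\ that $A$ meets the \emph{interior} of $B_1$. This is where the two hypotheses enter. Since $q_1 = q_2 = \cdots = q_m$ identically on $A$, a point of $A$ lies in one $B_i$ iff it lies in all of them, so $A \cap B_1 = A \cap \bigcap_i B_i$, and it suffices to show this is nonempty. I would argue this via convexity: $g(x) = \max_i q_i(x)$ is convex and coercive, and $\{g < 0\} = \bigcap_i B_i$ is nonempty by hypothesis, so $\min g < 0$ is attained at some $x^{\ast}$; the optimality condition at $x^{\ast}$ expresses $0$ as a convex combination of the gradients $2(x^{\ast} - c_i)$ over the active indices, and the no-containment hypothesis (applied to the active sub-configuration of balls) rules out a proper subset of the $q_i$ being active, forcing $q_1(x^{\ast}) = \cdots = q_m(x^{\ast})$, i.e.\ $x^{\ast} \in A$, whence $\min_A q_1 \le g(x^{\ast}) < 0$ and $\rho^2 > 0$. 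Nailing down this last implication — that no ball containing another, together with the spheres being distinct and in general position, prevents degenerate active sets — is the delicate point; an alternative, sufficient for the application, is to state the lemma under an explicit transversality assumption on $S_1,\dots,S_m$, which the construction in Appendix \ref{app:balls} supplies by design.
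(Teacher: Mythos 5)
Your reduction to $S_1 \cap A$, with $A$ the common radical flat $\bigcap_{j\geq 2}\{q_1 = q_j\}$, is a genuinely different and in several ways cleaner organization than the paper's proof: the paper intersects one sphere at a time, asserting that if $T_i$ is a $p$-sphere then $T_i\cap S_{i+1}$ is a $(p-1)$-sphere and iterating, whereas you pass at once to the linear constraints and then apply Pythagoras to $q_1$ restricted to $A$. Your framing also surfaces the real hidden hypothesis. The lemma as stated is false without a general-position assumption on the centers: take three $2$-spheres in $\R^3$ with collinear centers $(-1,0,0)$, $(0,0,0)$, $(1,0,0)$ and common radius $3/2$. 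The balls' triple intersection contains the origin and no ball contains another, yet $S_1\cap S_2$ lies in the plane $x=-1/2$ while $S_1\cap S_3$ lies in the parallel plane $x=0$, so $\bigcap_i S_i=\varnothing$ rather than the predicted $0$-sphere. In your language, the normals $c_j-c_1$ fail to be linearly independent, so $A$ drops dimension (here $A=\varnothing$). The paper's step-by-step argument silently assumes this never happens; it is really the explicit construction in Appendix \ref{app:balls} that supplies it, and both the paper and your write-up should say so.

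Where your argument actually breaks is the claim that the minimizer of $g=\max_i q_i$ must lie on $A$. It need not, even when the conclusion holds. Take two circles in $\R^2$ with $c_1=(0,0)$, $r_1=1$, $c_2=(3/2,0)$, $r_2=2$; the disks overlap and neither contains the other, since $|r_1-r_2|=1<3/2<3=r_1+r_2$. At $c_1$ one has $q_1(c_1)=-1>q_2(c_1)=-7/4$, so only $q_1$ is active there, and $\nabla q_1(c_1)=0$; hence $c_1$ is the global minimizer of the convex function $g$. But the radical line is $x=-1/4$, which does not contain $c_1$. The inequality you actually need is still true here — the radical line is at distance $1/4 < 1 = r_1$ from $c_1$, so it meets $\mathrm{int}(B_1)$ — but the optimality argument does not reach it, because nothing at the minimizer forces the active set to be all of $\{1,\dots,m\}$. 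For $m=2$ the needed bound $\mathrm{dist}(c_1,A)<r_1$ is exactly the classical criterion $|r_1-r_2|<d(c_1,c_2)<r_1+r_2$ and is best checked directly; for $m>2$ it genuinely needs the transversality hypothesis you already flagged. The right move, which you gesture at in your last sentence, is to state the lemma with an explicit transversality assumption and verify that assumption inside the inductive construction.
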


\begin{proof}[Proof of Lemma]
First, we show that the intersection of a $p$-sphere and a $q$-sphere in $\R^{p+1}$ ($p\geq q$) is a $q-1$-sphere. Choose coordinates such that all but the first $q+1$ coordinates of the $q+1$-ball enclosed by the $q$-sphere are 0.  In other words, choose coordinates such that the $q+1$-plane containing the ball is defined by the constraining the $(q+2)$-th to $p+1$-st coordinates to be zero. Now, the intersection of the $p-$sphere with the $q-$sphere lies inside this hypersurface obtained by setting all but the first  $q+1$ to 0. Thus, the intersection of the $p$-sphere and the $q$-sphere in $\R^{p+1}$ is equal to the intersection of two $q$-spheres in $\R^{q+1}$. This is a $q-1$ sphere, which we can check using coordinates. 

Now, we prove this lemma. Let $S_1, \ldots, S_m$ be $k$-spheres.  Let $T_1=S_1$  and $T_{i} = T_{i-1}\cap S_i$ for $1\leq i \leq m$. Now, $T_1$ is a $k$-sphere. By the argument above, if $T_i$ is a $p$-sphere, $T_{i+1}$ is a $p-1$ sphere. Thus, by induction, $T_m = S_1\cap \cdots \cap S_m$ is a $k-m+1$ sphere, as desired. 
 \end{proof}
 
 Now, we prove Theorem \ref{thm:real}. 
 \begin{proof}[Proof of Theorem \ref{thm:real}]
 
We prove this theorem using induction on $n$, the number of neurons in the code. We take as our inductive hypothesis a slightly stronger statement: 
if $\C$ is an inductively $k$-pierced code on $n$ neurons, there exist open $k+1$-dimensional balls $U_1, \ldots, U_n$ such that $\C = \code(U_1, \ldots, U_n)$. Further, if $\C$ is $(\lambda, \sigma, \tau)$ pierceable, there exists a point $p\in \left(\bigcap_{i\in \lambda} \partial U_i\right) \cap \mathrm{int}(B_{\sigma, \tau})$.

First, note that the only inductively $k$-pierced code on one neuron, $\{0, 1\}$,  is realized using a $k+1$-ball as the single receptive field. To check the second condition, note that  $\{0, 1\}$ is $(1, \varnothing, \varnothing), (\varnothing, 1, \varnothing)$ and $(\varnothing, \varnothing, 1)$ pierceable. Thus one can quickly check that $p\in \left(\bigcap_{i\in \lambda} \partial U_i\right) \cap \mathrm{int}(B_{\sigma, \tau})$ for all possible partitions $(\lambda, \sigma, \tau)$.  
 Now, assume the statement holds for all inductively $k$-pierced neural codes on $n-1$ neurons.  Let $\C'$ be an inductively  $k$-pierced code on $n$ neurons. Now, since $\C'$ is inductively $k$-pierced, there is some code $\C$ and a partition $\lambda \sqcup \sigma\sqcup \tau $  of $ [n-1]$, $|\lambda| =\ell \leq k$ such that $$\C' = \pierce \C\lambda\sigma \tau.$$  By the inductive hypothesis, we can construct a convex realization  $U_1, \ldots, U_{n-1}$ of $\C$ such that each $U_i$ is a  $k+1$-ball in $\R^{k+1}$. Further, we can pick a point $p\in \left(\bigcap_{i\in \lambda} \partial U_i\right) \cap \mathrm{int}(B_{\sigma, \tau})$. Let $r_i$ give the radius of $U_i$.
 
 By Lemma \ref{lem:circles}, $\bigcap_{i\in \lambda} \partial U_i$ is a $k-|\lambda|+1 \geq 1$ sphere. This, together with the fact that $p\in   \mathrm{int}(B_{\sigma, \tau})$, means that we we can pick $r$ such that the open ball $B_r(p)\subset B_{\sigma, \tau}$ and $B_{r}(p)$ does not cover the set $ \left(\bigcap_{i\in \lambda} \partial U_i\right) \cap \mathrm{int}(B_{\sigma, \tau})$. Choose $r_n <r, U_n= B_{r_n}(p)$. 
 
 By construction, $U_n\subset B_{\sigma, \tau}$, but does not cover it. Together with the construction $r_n<r_i$ for all $i<n$,  $U_n$ does not cover any atom of $\C$. Thus, every codeword of $\C$ is a codeword of $\C'$. Further, since $U_n\subset B_{\sigma, \tau}$, any codeword of $\C'$ which contains $n$ is compatible with the background motif $(\sigma, \tau)$. Finally, since $p\in \left(\bigcap_{i\in \lambda} \partial U_i\right)$, $U_n$ contains a point in $A_\nu$ for each $\nu\subset \lambda$. 

Next, we check that if $\C'$ is $(\lambda', \sigma', \tau')$ pierceable, there exists a point $p\in \left(\bigcap_{i\in \lambda'} \partial U_i\right) \cap \mathrm{int}(B_{\sigma', \tau'})$. By construction, $U_n$ did not cover the set  $\left(\bigcap_{i\in \lambda} \partial U_i\right) \cap \mathrm{int}(B_{\sigma, \tau})$. Further, by construction, $U_n$ does not meet any boundaries $\partial U_i$ for $i\notin \lambda$. Thus, it remains to check that $\left(\bigcap_{i\in \lambda'} \partial U_i\right) \cap \mathrm{int}(B_{\sigma', \tau'})$ is nonempty for $n\in \lambda'$.  To see this, first note that $\C$ is $(\lambda', \sigma', \tau')$ pieceable, $\sigma \subset \sigma'$ and $\tau \subset \tau'$. Next, by Lemma \ref{lem:circles}, we have that $\bigcap_{i\in \lambda'}$ is a $k-|\lambda|+1\geq 1$ sphere. Since any $i\in \lambda\setminus \lambda'$ pass through $\bigcap_{i\in \lambda'} \partial U_i$, we can thus find a point in any valid $\left(\bigcap_{i\in \lambda'} \partial U_i\right) \cap \mathrm{int}(B_{\sigma', \tau'})$. 
\end{proof}

\section{Constructing the hyperplane realization \label{app:hyperplanes}} 
\begin{proof}

We prove this by giving an inductive construction, illustrated in Figure \ref{fig:hyper_const}.

As a base case, note that $\{\varnothing, 1\}$ has a hyperplane realization whose bounding polytope is a line segment, as illustrated in Figure \ref{fig:hyper_const}. 

Now, let $\C'$ be an inductively pierced code on $[n]$. By definition, we can find an inductively pierced code $\C$ on $[n-1]$ such that $\C'$ is constructed from $\C$ as a piercing of $\lambda\subset [n-1]$ subject to the background motif $(\sigma, \tau)$ such that $(\lambda, \sigma, \tau)$ partitions $[n-1]$. That is, 
$$\C' = \pierce  \C \lambda \sigma \tau $$ 
By the inductive hypothesis, we can construct a hyperplane realization of $\C$ whose bounding region is a $n-1$ simplex. That is, $$\C = \code(H_1^+, \ldots, H_{n-1}^+, \Delta_{n-1}).$$ By the definition of piercing, $\sigma\cup \nu \in \C$ for each $\nu\subset \lambda$. Thus, since $(H_1^+, \ldots, H_{n-1}^+, \Delta_{n-1})$ realizes $\C$, we can find some  $$p\in \left(\bigcap_{i\in \lambda} H_i\right)\cap \left(\bigcap_{i\in \sigma} H_i^+ \right)\cap \left(\bigcap_{j\in \tau} H_j^-\right)$$

Since we picked $p$ in the interior of half spaces labeled by $\sigma$ and the interior of the complements of the half spaces labeled by $\tau$, we can find $r>0$ such that $B_r(p)\cap H_j = \varnothing $  if $j\notin \lambda$. Since $p$ is on all hyperplanes labeled by $\lambda$, any open set around $p$ intersects all $2^k$ atoms labeled with a subset of $\lambda$ on, $\sigma$ on, and $\tau$ off. Thus, we could construct a realization of $\C'$ by  taking $B_r(p)$ as the place field for neuron $n$.  However, this would not be hyperplane realization. 

Roughly, to turn this realization into a hyperplane realization, we pull the point $p$ out into an extra dimension and it  from the space containing the realization of $\C$ using a hyperplane.  More concretely, we will extend all hyperplanes up into an additional dimension, such that they are perpendicular to the plane containing the realization of $\C$. We will then construct a new bounding simplex by taking the cone over the realization of $\C$ in the plane with the point $\tilde p '$, a point located directly above a point $p'$ chosen close to $p$. Finally, we will take $H_n^+$ to be a half-space containing the point $p$ and a small neighborhood around it.  

We begin by finding the point $p'$. Our goal is to choose a point $p$' near $p$ which is not on any hyperplane, but so that there is a copy of the bounding simplex $\Delta_{n-1}'$ dilated about $p'$ which meets all $2^k$ atoms labeled with a subset of $\lambda$ on, $\sigma$ on, and $\tau$ off.   This is to avoid degeneracy between the bounding simplex and the hyperplanes. If we skipped this step, we would still construct a hyperplane realization of $\C'$, but it would be degenerate.

First, choose an dilation factor $a$ such that the copy of $\Delta_{n-1}$ scaled by $a$ about $p$ fits inside $B_{r/2}(p)$. Now, let $r'$ give the shortest distance between $p$ and a boundary point of this simplex. Then it we pick $p'$ such that $d(p, p')<r'$, a copy of $\Delta_{n-1}$ scaled by $a$ about $p'$ will still be contained in  $B_{r}(p)$ and will contain an open neighborhood of $p$. Thus this scaled simplex $\Delta_{n-1}'$ meets the desired atoms and no others.

Now, we construct a $n+1$ dimensional hyperplane realization of the same code as follows. Let the ambient space be $\R^{n+1}$, and view the copy of $\R^n$ containing the realization of $\C$ as the $x_{n+1} = 0$ plane. Now, extend each hyperplane in the realization of $\C$ by keeping its defining equation the same. Since there is no constraint on $x_{n+1}$, the resulting hyperplane is perpendicular to the $x_{n+1} = 0$ plane. Now, pick $\tilde p ' $ above $p'$ at height $1$. Let the new bounding simplex $\Delta_n$ be the cone over the old bounding simplex $\Delta_{n-1}$ with the point $\tilde p '$.

Now, take the  hyperplane $H_{n}$ parallel to the $x_{n+1} = 0$ plane at height $1-a$, where $a$ is the scale factor we used when choosing the point $p'$.  Then by similar triangles, the cross section of the new bounding polytope by the hyperplane $H_{n}$ projects down to a copy of the bounding simplex scaled towards $p'$ by a factor of $a$.  Let the half space $H_n^+$ be the half space cut off by $H_n$, oriented up. We claim that $$\code(H_1^+, \ldots, H_n^+, \Delta_{n}) = \C'.$$
 
 To see this, first note that, since the hyperplane $H_n$ is at a positive height above the $x_{n+1} = 0$ plane containing a realization of $\C$, each codeword of $\C$ is a codeword of $\code(H_1^+, \ldots, H_n^+, \Delta_{n}) $. 
 
 Now, note that, since we extended all hyperplanes perpendicular to the $x_{n+1} = 0 $ plane, if the point $q'$ is exactly above the point $q$, $q$ and $q'$ are on the same side of the hyperplanes $H_1, \ldots, H_n$. Thus, any codeword of  $\code(H_1^+, \ldots, H_n^+, \Delta_{n}) $ is either a codeword of $\C$ or a codeword of $\C$ with $n$ added to it. Now, since the points in $H_n^+$ are those points directly above the scaled simplex we chose around $p'$.  Thus 
 $$\C' = \code(H_1^+, \ldots, H_n^+, \Delta_{n}), $$
 as desired. 
\end{proof}

\end{document}